\newtheorem{theorem}{Theorem}
\newtheorem{lemma}[theorem]{Lemma}
\newtheorem{proposition}[theorem]{Proposition}
\newtheorem{corollary}[theorem]{Corollary}
\theoremstyle{definition}
\newtheorem{definition}[theorem]{Definition}
\newtheorem{remark}[theorem]{Remark}
\newtheorem{example}[theorem]{Example}
\newcommand{\reals}{\mathbb{R}}
\newcommand{\complex}{\mathbb{C}}
\newcommand{\quaternions}{\mathbb{H}}
\newcommand{\R}{\reals}
\newcommand{\C}{\complex}
\newcommand{\HH}{\quaternions}
\newcommand{\M}{\mathcal{M}}
\newcommand{\sdet}{\mathop{{\rm Sdet}}}
\newcommand{\rank}{\mathop{{\rm rank}}}
\newcommand{\diag}{{\rm diag}}
\newcommand{\I }{\mathbf{i}}
\newcommand{\J }{\mathbf{j}}
\newcommand{\K}{\mathbf{k}}
\newcommand{\id}{\,\mathrm{Id}}
\newcommand{\sgn}{\mathrm{sgn}}
\begin{document}

\title{A topological approach to left eigenvalues\\ of quaternionic matrices\thanks{Partially supported by FEDER and Research Project MTM2008-05861 MICINN Spain.}}

\author{
E.~Mac\'{\i}as-Virg\'os
\thanks{Facultade de Matem\'aticas, Universidade de Santiago de Compostela, 15782 - Spain {\tt quique.macias@usc.es}} 
and M.~J.~Pereira-S\'aez\thanks{ Facultade de Econom\'{\i}a e Empresa, Universidade da Coru\~na, 15071 - Spain {\tt maria.jose.pereira@udc.es}} \\
}

\markboth{E.\ Mac\'ias-Virg\'os and M.\ J.\ Pereira-S\'aez}{Linear and Multilinear Algebra}

\maketitle

\begin{abstract}
It is known that a $2\times 2$ quaternionic matrix has one, two or an infinite number of left eigenvalues, but the available algebraic proofs  are  difficult to generalize to higher orders. In this paper a different point of view is adopted by  computing the topological degree of a characteristic map associated to the matrix and discussing the rank of the differential. The same techniques are extended to $3\times 3$ matrices, which are still lacking a complete classification.
\end{abstract}

{\bf Keywords} Quaternionic matrices, Left eigenvalues, Characteristic map, Topological degree.

{\bf MSC}
15A33, 15A18


\section{INTRODUCTION}
In 1985, Wood   \cite{WOOD} proved that any $n\times n$ quaternionic matrix $A$  has at least one {\em left eigenvalue}, that is  a quaternion $\lambda\in \quaternions$ such that the matrix $A-\lambda \id$ is  singular. However, even for matrices of small size the left spectrum is not fully understood yet (see Zhang's papers \cite{ZHANG1997,ZHANG2007} for a survey). For instance, it was only in 2001 when Huang and So \cite{HUANGSO2001} proved  that a $2\times 2$ matrix may have one, two or an infinite number of left eigenvalues; a different proof was  presented by the authors in \cite{MP1}. While Wood used topological techniques, namely homotopy groups, the  two latter papers are of algebraic nature and seemingly difficult to generalize for $n>2$.

In this article we try a different approach. The basic ideas will be those of characteristic map, linearization and topological degree. In the simplest case $n=2$ we associate to each matrix $A$ a polynomial $\mu_A$ whose roots are the left eigenvalues; computing the rank of its differential allows us to classify the different types of spectra. 

In the second part of the paper we extend those techniques to $3\times 3$ matrices. This time the characteristic map may not be a polynomial but a rational map, usually with a  point of discontinuity. This seems to require the use of a local version of degree, although a closer look allows us to reduce the problem to the global theory. In particular, this gives a new proof of the existence of left eigenvalues.

Unlike the $2\times 2$ case, where the linear equations that appear correspond to the well known Sylvester equation, the $3\times 3$ situation is much more complex. Then for  $n=3$ a complete classification of spectra is still unknown. Nevertheless our method allows to deal with specific examples and opens the way for a better understanding of the general case.

The paper is organized as follows. In Section \ref{TOPOL} we recall some topological and algebraic preliminaries. Although our ideas are closely related to the theory of quasideterminants \cite{GELFAND2005}, we have preferred a development based on Study's determinant which parallels the commutative setting. Section \ref{CHARSECT} is devoted to a notion of  {\em characteristic map} for the left eigenvalues of a quaternionic matrix $A$, that is a map $\mu_A\colon \HH \to \HH$ such that $\mu_A(\lambda)=0$ if and only if the matrix $A-\lambda\id$ is not invertible. In Section \ref{ORDERTWO} we give a complete classification of the left spectra of $2\times 2$ matrices, depending on the rank of the characteristic map. In Section \ref{CHARMAP3x3} we prove that any $3\times 3$ matrix $A$ has a characteristic map $\mu_A$ which is either a polynomial (when some of the entries outside the diagonal is null) or a rational function with a distinguished point $\pi_A$ called its {\em pole}. When $\mu_A$ is continuous it has topological degree $3$. However, $\pi_A$ may be a point of discontinuity; in this case the matrix $B=A-\pi_A\id$ turns out to be invertible and we prove that  $B$ and $B^{-1}$ have diffeomorphic spectra and that $B^{-1}$ admits a polynomial characteristic map. The last Section offers several illustrative examples.

\section{PRELIMINARIES}\label{TOPOL}
\subsection{Topological degree}
The topological degree (or Brouwer degree) of a map can be defined by techniques either from algebraic topology \cite{DIEUDONNE} or from functional analysis  \cite{AMBROSETTI,DEIMLING,PUMWAL}.
We want to apply the following global result (cf. \cite[p.~101]{MADSENTORNEHAVE}):
\begin{theorem}\label{REGULAR}
  Let $M$ be a connected closed oriented manifold, let $\mu\colon M \to M$ be a differentiable map of degree $k$. Let $m\in M$ be a regular value such that the differential $\mu_{*\lambda}$ preserves the orientation for any $\lambda$ in the fiber $\mu^{-1}(m)$. Then $\mu^{-1}(m)$ is a finite set  with $k$ points.
\end{theorem}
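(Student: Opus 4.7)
The plan is to reduce the theorem to the standard differential-topological formula that expresses the Brouwer degree at a regular value as a signed count of preimages. Concretely, I would rely on the identity
\[
\deg(\mu) \;=\; \sum_{\lambda \in \mu^{-1}(m)} \sgn\bigl(\det \mu_{*\lambda}\bigr),
\]
valid for any differentiable self-map $\mu$ of a connected closed oriented manifold $M$ and any regular value $m$, where the sign is $+1$ when $\mu_{*\lambda}$ preserves orientation and $-1$ otherwise. This is exactly the content of the reference cited just before the statement (Madsen--Tornehave), so in the write-up I would quote it rather than reprove it.

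First, I would argue that $\mu^{-1}(m)$ is finite. Since $m$ is regular, at every $\lambda\in\mu^{-1}(m)$ the differential $\mu_{*\lambda}$ is a linear isomorphism, so by the inverse function theorem each such $\lambda$ has a neighborhood on which $\mu$ is a diffeomorphism; in particular, the preimage $\mu^{-1}(m)$ is discrete. Combined with the closedness of $\mu^{-1}(m)$ in the compact manifold $M$, this forces it to be a finite set.

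Next, I would apply the signed-count formula. By hypothesis, every differential $\mu_{*\lambda}$ with $\lambda\in\mu^{-1}(m)$ preserves orientation, so every summand $\sgn(\det\mu_{*\lambda})$ equals $+1$. Therefore
\[
k \;=\; \deg(\mu) \;=\; \sum_{\lambda\in\mu^{-1}(m)} 1 \;=\; \bigl|\mu^{-1}(m)\bigr|,
\]
which is exactly the asserted conclusion.

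The main obstacle is really just bookkeeping: the only nontrivial input is the signed-preimage formula for the degree, and since the paper is explicitly citing the global (smooth) theory of Brouwer degree as developed in the references, no additional machinery needs to be introduced. If a self-contained argument were desired, the hard step would be justifying the formula for degree — typically done via the change-of-variables/volume-form viewpoint or via simplicial approximation — but for the purpose of this paper it is cleanest to import it as a black box.
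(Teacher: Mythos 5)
Your proof is correct: the finiteness argument (inverse function theorem makes $\mu^{-1}(m)$ discrete, compactness of the closed manifold makes it finite) is sound, and the reduction to the signed-preimage formula $\deg(\mu)=\sum_{\lambda\in\mu^{-1}(m)}\sgn(\det\mu_{*\lambda})$ with every sign equal to $+1$ is exactly right. The paper gives no proof at all of this statement --- it imports it wholesale from the cited reference (Madsen--Tornehave) --- so your argument is just the standard deduction that this citation encapsulates; in other words, you take essentially the same approach the paper intends.
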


Sometimes one has to deal with a local notion of degree. The main result  is the following one \cite[p.~38]{AMBROSETTI}.

\begin{theorem} Let $\Omega$ be a bounded open set in $\R^n$. Let $\mu \colon \overline{\Omega} \to \R^n$ be  a map which is  continuous on the closure $\overline{\Omega}$ and differentiable on $\Omega$. Suppose that $0$ is a regular value of $\mu$ and that $0\notin \mu(\partial\Omega)$. Then
$$\deg(\mu,\Omega,0)=
\sum\limits_{\lambda\in\mu^{-1}(0)}{
\sgn[J_\mu(\lambda)]
}$$
where we denote by $J_\mu$ the Jacobian of $\mu$.
\end{theorem}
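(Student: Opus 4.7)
The plan is to combine three standard properties of the Brouwer degree---finiteness of the preimage of a regular value, the excision/additivity axiom, and the computation of the local degree at a non-degenerate zero---with the inverse function theorem.

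First, I would show that $\mu^{-1}(0)$ is finite. Because $\mu$ is continuous on $\overline{\Omega}$ and $0\notin\mu(\partial\Omega)$, the set $\mu^{-1}(0)$ is closed in $\overline{\Omega}$ and contained in $\Omega$, hence compact. At each $\lambda\in\mu^{-1}(0)$ the regularity hypothesis gives $J_\mu(\lambda)\neq 0$, so by the inverse function theorem there is an open neighbourhood of $\lambda$ on which $\mu$ is a diffeomorphism; in particular $\lambda$ is isolated. A discrete compact set is finite, so $\mu^{-1}(0)=\{\lambda_1,\dots,\lambda_m\}$.

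Second, I would shrink the neighbourhoods obtained above to disjoint open sets $U_i\subset\Omega$ with $\lambda_i\in U_i$, each mapped diffeomorphically by $\mu$ onto an open ball around $0$, and on which $J_\mu$ keeps constant sign (by continuity of $J_\mu$ and connectedness of $U_i$). Since $\mu$ has no zeros in $\Omega\setminus\bigcup_i U_i$, excision and finite additivity of the degree yield
\[
\deg(\mu,\Omega,0)=\sum_{i=1}^{m}\deg(\mu,U_i,0).
\]
It therefore suffices to prove that $\deg(\mu,U_i,0)=\sgn[J_\mu(\lambda_i)]$ for each $i$.

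Third, I would reduce to the linear case via the rescaling homotopy $H_t(x)=t^{-1}\mu\bigl(\lambda_i+t(x-\lambda_i)\bigr)$ for $t\in(0,1]$, extended by $H_0(x)=L_i(x-\lambda_i)$, where $L_i=D\mu(\lambda_i)$. A first order Taylor estimate shows that $H_t$ is continuous in $(t,x)$ and, on the boundary of a sufficiently small ball $B$ about $\lambda_i$, it stays away from $0$ because $L_i$ is an isomorphism; thus the homotopy is admissible. By homotopy invariance $\deg(\mu,U_i,0)=\deg(L_i,B,0)$, and for an invertible linear map the Brouwer degree at the origin equals $\sgn\det L_i=\sgn J_\mu(\lambda_i)$, a normalization that can be verified by factoring $L_i$ into elementary transformations whose degree is computed directly. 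Summing over $i$ gives the claimed formula.

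The main obstacle is the third step: one must carefully check that the rescaling homotopy remains admissible (nowhere zero on the boundary of a small ball) and that the degree of a linear isomorphism equals the sign of its determinant. Both hinge on the non-degeneracy $J_\mu(\lambda_i)\neq 0$ furnished by the regular-value hypothesis, so the argument is ultimately just a careful use of the axiomatic properties of the Brouwer degree.
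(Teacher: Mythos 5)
The paper does not prove this statement at all: it is quoted verbatim as a known result from Ambrosetti--Malchiodi \cite[p.~38]{AMBROSETTI}, so there is no ``paper proof'' to compare against. Your argument is the standard textbook proof of this fact and its overall structure is sound: isolation and finiteness of $\mu^{-1}(0)$, localization by excision/additivity, and computation of each local degree by the rescaling homotopy onto the linearization, together with the normalization $\deg(L,B,0)=\sgn\det L$ for a linear isomorphism $L$. Two caveats are worth flagging. First, the inverse function theorem requires $\mu$ to be $C^1$ near each zero, whereas the statement assumes only differentiability on $\Omega$; however, you do not actually need a local diffeomorphism --- isolation of each zero follows directly from the first-order expansion $\mu(\lambda_i+h)=D\mu(\lambda_i)h+o(|h|)$ and the invertibility of $D\mu(\lambda_i)$, which gives $|\mu(\lambda_i+h)|\geq c|h|-o(|h|)>0$ for small $h\neq 0$; the same estimate is what makes your homotopy $H_t$ continuous at $t=0$ and nonvanishing on the boundary of a small ball, so the repair is cosmetic. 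Second, be explicit about which construction of the degree you are working with: your proof treats the degree axiomatically (additivity, excision, homotopy invariance, linear normalization) and derives the formula as a theorem, which is legitimate; but in many analytic treatments, including the source the paper cites, the displayed sum over $\mu^{-1}(0)$ \emph{is} the definition of the degree for regular values, and the theorem's content is the consistency of its extension to continuous maps and non-regular values. Stating up front that the degree is taken as the unique function satisfying the usual axioms removes any appearance of circularity.
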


A well known consequence is that if $\deg(\mu,\Omega,0)\neq 0$  then the equation $\mu(\lambda)=0$ has at least one solution in $\Omega$. In fact, for maps of the sphere into itself, the  {\em existence} of solutions only depends on the {\em continuity}, by the following result \cite[Ch. VIII, Ex. 2.5, p.~191]{MASSEY}:

\begin{proposition} Let $\mu\colon S^4\to S^4$ be a continuous map whose degree is nonzero. Then $\mu$ is surjective.
\end{proposition}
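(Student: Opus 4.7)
The plan is to prove the contrapositive: if $\mu$ is not surjective, then its degree must be zero.

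First I would pick a point $p\in S^4$ not in the image of $\mu$. Then the map $\mu$ factors through the complement $S^4\setminus\{p\}$, in the sense that $\mu$ equals the composition of $\mu\colon S^4\to S^4\setminus\{p\}$ followed by the inclusion $S^4\setminus\{p\}\hookrightarrow S^4$. The key observation is that $S^4\setminus\{p\}$ is homeomorphic to $\mathbb{R}^4$ (via stereographic projection from $p$), and in particular is contractible.

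Next I would invoke the homotopy invariance of degree. Since $S^4\setminus\{p\}$ is contractible, the map $\mu\colon S^4\to S^4\setminus\{p\}$ is homotopic to a constant map, and composing with the inclusion yields a homotopy between $\mu\colon S^4\to S^4$ and a constant map $S^4\to S^4$. Any constant map has degree $0$, and homotopic maps have the same degree, so $\deg(\mu)=0$, contradicting the hypothesis.

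The argument is essentially formal once one has (i) the fact that removing a point from $S^n$ leaves a contractible space and (ii) homotopy invariance of the Brouwer degree; both are standard and either can be cited from \cite{MADSENTORNEHAVE} or \cite{DIEUDONNE}. The only mildly delicate point is justifying that $\mu\colon S^4\to S^4\setminus\{p\}$ is genuinely null-homotopic \emph{as a map into the codomain} $S^4\setminus\{p\}$ (not merely as a map into $S^4$), but this is immediate from contractibility of the target.
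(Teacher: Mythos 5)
Your proof is correct. Note that the paper itself offers no argument for this proposition: it simply cites it as an exercise in Massey's book (\cite[Ch.~VIII, Ex.~2.5, p.~191]{MASSEY}), so there is no internal proof to compare against; what you have written is precisely the standard solution to that exercise. The chain of reasoning is sound and complete: a missed point $p$ lets $\mu$ factor through $S^4\setminus\{p\}\cong\reals^4$, which is contractible; corestricting $\mu$ to this contractible target makes it null-homotopic there, and composing the null-homotopy with the inclusion $S^4\setminus\{p\}\hookrightarrow S^4$ gives a homotopy in $S^4$ from $\mu$ to a constant map; since a constant map factors through a point it induces the zero map on $H_4(S^4)$ and so has degree $0$, and homotopy invariance of degree then forces $\deg(\mu)=0$, contradicting the hypothesis. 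Your parenthetical care about where the null-homotopy lives (in the codomain $S^4\setminus\{p\}$, then pushed into $S^4$ by the inclusion) is exactly the right point to flag, and your resolution of it is correct.
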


\subsection{Linearization}
We consider the space of quaternions $\quaternions$ as a differentiable manifold diffeomorphic to $\reals^4$. Then the differential $\mu_{*\lambda}\colon \quaternions \to \quaternions$ at the point $\lambda\in\quaternions$  of the differentiable map $\mu \colon \quaternions \to \quaternions$  can be computed by means of the   formula
$$\mu_{*\lambda}(X)={\frac{d}{dt}}_{\vert t=0}{\mu(\lambda+tX)}=\lim\limits_{t\to 0}{\frac{1}{t}\left(\mu(\lambda+tX)-\mu(\lambda)\right)}.$$

\begin{lemma}\label{ReglasDerivacion}
\begin{enumerate}
\item
Let $f,g\colon \quaternions \to \quaternions$ be two differentiable maps. Then the differential of the product is given by
$$(f\cdot g)_{*\lambda}(X)=f_{*\lambda}(X)\cdot g(\lambda)+f(\lambda)\cdot g_{*\lambda}(X);$$
\item
assume that $f(\lambda)\neq 0$ for all $\lambda\in \quaternions$. Let $1/f\colon \quaternions \to \quaternions$ be the map given by $(1/f)(\lambda)=f(\lambda)^{-1}$.
Then
$$(1/f)_{*\lambda}(X)=-f(\lambda)^{-1}f_{*\lambda}(X)f(\lambda)^{-1}.$$
\end{enumerate}
\end{lemma}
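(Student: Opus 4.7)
My plan is to prove the two parts in order, using the limit definition of the differential given just above the lemma, and being careful at every step to preserve the order of quaternionic factors since $\quaternions$ is not commutative.

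For part (1), I would start from
\[
(f\cdot g)_{*\lambda}(X)=\lim_{t\to 0}\frac{1}{t}\bigl(f(\lambda+tX)g(\lambda+tX)-f(\lambda)g(\lambda)\bigr)
\]
and apply the standard telescoping trick: add and subtract $f(\lambda)g(\lambda+tX)$ inside the bracket. This splits the quotient into
\[
\frac{1}{t}\bigl(f(\lambda+tX)-f(\lambda)\bigr)\,g(\lambda+tX)\;+\;f(\lambda)\,\frac{1}{t}\bigl(g(\lambda+tX)-g(\lambda)\bigr).
\]
Passing to the limit, the first summand tends to $f_{*\lambda}(X)\cdot g(\lambda)$ (using continuity of $g$ at $\lambda$ to replace $g(\lambda+tX)$ by $g(\lambda)$) and the second to $f(\lambda)\cdot g_{*\lambda}(X)$. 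The only thing to double-check is that the telescoping is chosen so that the factors end up in the correct order—writing $f$ on the left of the first difference and on the left of $f(\lambda)$ in the second term is what makes the formula come out as stated.

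For part (2), the cleanest route is to differentiate the identity $f\cdot(1/f)\equiv 1$ rather than redo a limit computation. Applying the product rule from part (1) to the constant map $\lambda\mapsto f(\lambda)f(\lambda)^{-1}=1$, whose differential is zero, I get
\[
f_{*\lambda}(X)\cdot f(\lambda)^{-1}+f(\lambda)\cdot(1/f)_{*\lambda}(X)=0.
\]
Solving for $(1/f)_{*\lambda}(X)$ requires multiplying on the left by $f(\lambda)^{-1}$, which is legitimate because $f(\lambda)\neq 0$ by hypothesis, and yields precisely $(1/f)_{*\lambda}(X)=-f(\lambda)^{-1}f_{*\lambda}(X)f(\lambda)^{-1}$. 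I should also briefly note that $1/f$ is differentiable at $\lambda$: this follows either from smoothness of quaternionic inversion $q\mapsto q^{-1}$ on $\quaternions\setminus\{0\}$ composed with $f$, or from a direct limit argument analogous to the one above.

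There is no real obstacle here; the only subtlety is the non-commutativity, which forces me to keep $f_{*\lambda}(X)$ sandwiched between two copies of $f(\lambda)^{-1}$ in part (2) and to resist simplifying to the commutative-looking $-f_{*\lambda}(X)/f(\lambda)^{2}$. Both computations are standard once the bookkeeping of left and right factors is respected.
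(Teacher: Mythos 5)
Your proof is correct. Note that the paper states this lemma without any proof at all (it is treated as a standard fact about differentials, adapted to the noncommutative setting), so there is no argument of the authors to compare against; your write-up simply supplies the missing justification. Both steps are the natural ones: the telescoping decomposition with the factors kept in the right order gives part (1), and differentiating the identity $f\cdot(1/f)\equiv 1$ gives part (2), with the two genuine subtleties handled correctly --- you keep $f_{*\lambda}(X)$ sandwiched between the two copies of $f(\lambda)^{-1}$ rather than collapsing to a commutative-looking quotient, and you observe that the differentiability of $1/f$ must be established before the product rule can be applied to it (either via smoothness of $q\mapsto q^{-1}$ on $\quaternions\setminus\{0\}$, or via the direct identity $a^{-1}-b^{-1}=a^{-1}(b-a)b^{-1}$, which yields the formula in one line).
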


\subsection{Sylvester equation}\label{SYLVESTER}
Let $P,Q,R\in \quaternions$ be three quaternions. We are interested (see formula (\ref{DIFER})) in the rank of $\reals$-linear maps $\Sigma\colon \quaternions \to \quaternions$ of the form
$\Sigma(X)=PX+XQ$. The equation $\Sigma(X)=R$ has been widely studied, sometimes under the name of Sylvester equation \cite{JOHNSON}.

\begin{lemma}
\begin{enumerate}
\item
Let $P=t+x\I+y\J+z\K$. Then the matrix associated  to the $\reals$-linear map $X\mapsto PX$ with respect to the basis $\{1,\I ,\J ,\K \}$  is
$L(P)=\Re(P)\id+A(P)$, where $\Re(P)$ is the real part of $P$ and
$$A(P)=\begin{bmatrix}
0 & -x & -y & -z \cr
x & 0 & -z & y \cr
y & z & 0 & -x \cr
z & -y & x & 0
 \end{bmatrix};$$
\item
analogously,  if $Q=s+u\I+v\J+w\K$ then the matrix associated to the right translation $X\mapsto XQ$  is
$R(Q)=\Re(Q)\id+B(Q)$, where
$$B(Q)=\begin{bmatrix}
0 & -u & -v & -w \cr
u & 0 & w & -v \cr
v & -w & 0 & u \cr
w & v & -u & 0\cr
 \end{bmatrix}.$$
\end{enumerate}
\end{lemma}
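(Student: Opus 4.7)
The plan is to prove both parts by direct computation: I will apply each map to the basis vectors $\{1, \I, \J, \K\}$ in turn and read off the columns of the matrix, using the standard multiplication table $\I^2=\J^2=\K^2=-1$, $\I\J=\K=-\J\I$, $\J\K=\I=-\K\J$, $\K\I=\J=-\I\K$. Since the claimed matrix decomposes as $\Re(P)\id + A(P)$, with $\Re(P)\id$ corresponding trivially to the scalar contribution $t\cdot X$, it suffices to verify the purely imaginary part, that is, to check $A(P)$ when $P=x\I+y\J+z\K$.

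For the first part, I would compute $P\cdot 1 = x\I+y\J+z\K$, giving the first column $(0,x,y,z)^T$; $P\cdot\I = -x + z\J - y\K$, giving $(-x,0,z,-y)^T$; $P\cdot\J = -y - z\I + x\K$, giving $(-y,-z,0,x)^T$; and $P\cdot\K = -z + y\I - x\J$, giving $(-z,y,-x,0)^T$. Reading these as columns reproduces exactly the matrix $A(P)$ in the statement. For the second part, writing $Q=u\I+v\J+w\K$ and computing $1\cdot Q$, $\I\cdot Q$, $\J\cdot Q$, $\K\cdot Q$, one obtains respectively $u\I+v\J+w\K$, $-u - w\J + v\K$, $-v + w\I - u\K$, and $-w - v\I + u\J$, whose coordinate vectors form the columns of $B(Q)$.

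The only obstacle is bookkeeping of signs: left multiplication by $\I$ on $\J$ yields $+\K$, whereas right multiplication of $\J$ by $\I$ yields $-\K$, and similarly for the other cross terms. This asymmetry is precisely what distinguishes $A(P)$ from $B(Q)$ — the two matrices differ in the signs of the off-diagonal imaginary block, reflecting that the left and right regular representations of $\HH$ commute but are not equal. Once the multiplication table is applied carefully, both identities follow by inspection, and the additive split $L(P)=\Re(P)\id+A(P)$, $R(Q)=\Re(Q)\id+B(Q)$ is immediate since $\reals$ is central.
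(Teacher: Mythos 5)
Your proof is correct: all the quaternion products are computed accurately, the resulting coordinate vectors match the columns of $A(P)$ and $B(Q)$ exactly, and the reduction to the purely imaginary part (since $\Re(P)\id$ accounts for the central real part) is valid. The paper states this lemma without proof, treating it as a routine verification, and your column-by-column computation is precisely the standard argument it implicitly relies on.
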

Next Theorem is a reformulation of the results by Janovsk\'a and Opfer in  \cite{JANOVSKAOPFER2008},  see also \cite{GEORGIEV}.
\begin{theorem}\label{RANGO}
\begin{enumerate}
\item
 The rank of  $\Sigma$ is even, namely $0$, $2$ or $4$;
 \item
 $\rank \Sigma < 4$ if and only if $P$ and $- Q$ are similar quaternions, i.e. they have the same norm  and the same real part;
 \item
 $\rank \Sigma =0$ if and only if $P$ is a real number and $Q=-P$. \end{enumerate}
\end{theorem}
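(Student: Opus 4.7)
My plan is to work directly with the real vector space $\ker\Sigma \subseteq \HH$, avoiding the explicit $4\times 4$ matrix $L(P)+R(Q)$. Two structural facts about $\HH$ drive everything: every nonzero quaternion is invertible, and $\ker\Sigma$ carries a natural right-module structure over the centralizer $C(Q)=\{Y\in\HH : YQ=QY\}$ (if $PX=-XQ$ and $YQ=QY$, then $P(XY) = -XQY = -XYQ$).

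For part~(2), if $X\in\ker\Sigma\setminus\{0\}$ then $X$ is invertible and $PX+XQ=0$ rewrites as $P=-XQX^{-1}$, so $P$ and $-Q$ are similar; conversely, any $h\in\HH^\ast$ realizing $hPh^{-1}=-Q$ gives $h^{-1}\in\ker\Sigma$. Hence $\rank\Sigma<4$ iff $P$ and $-Q$ are similar. For the parity in~(1), when $Q\notin\R$ the centralizer $C(Q)=\R[Q]\cong\C$ has real dimension~$2$, forcing $\dim_\R \ker\Sigma \in \{0,2,4\}$; when $Q\in\R$, a real quaternion is similar only to itself, so the similarity criterion collapses to $P=-Q\in\R$, and one then checks $\Sigma\equiv 0$ directly, again yielding an even-dimensional kernel.

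For~(3), evaluating $\Sigma\equiv 0$ at $X=1$ gives $P=-Q$, and then $PX=XP$ for all $X\in\HH$ forces $P$ into the center $\R\subset\HH$; the converse is immediate. This also handles the residual case $\dim\ker\Sigma=4$: it must be the rank-$0$ scenario, so whenever $P$ and $-Q$ are similar without both equaling a common real scalar, $\dim\ker\Sigma=2$ and $\rank\Sigma=2$.

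The main subtlety I anticipate is the parity claim when $Q\in\R$, since then $C(Q)=\HH$ is too large to constrain $\dim\ker\Sigma$ through the module structure alone; the fix is to exploit the similarity criterion from~(2) to reduce immediately to $P=-Q\in\R$, sidestepping the centralizer argument in that case.
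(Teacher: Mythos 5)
Your proof is correct, but it takes a genuinely different route from the paper's. The paper writes out the explicit $4\times 4$ real matrix $J=L(P)+R(Q)$ and computes $\det J$ as a sum of squares (formula (\ref{DET})), concluding that $\rank \Sigma =4$ unless $\Re(P)+\Re(Q)=0$ and the imaginary parts of $P$ and $Q$ have equal norms --- which is precisely the similarity condition --- and, in that degenerate case, observes that $J$ is skew-symmetric, hence of even rank; rank $0$ then forces all imaginary parts to vanish. You instead argue coordinate-free: part (2) follows from the invertibility of nonzero quaternions (a nonzero kernel element conjugates $-Q$ into $P$), parity follows from $\ker\Sigma$ being a right vector space over the centralizer $C(Q)=\R[Q]\cong\C$ when $Q\notin\R$, and the case $Q\in\R$ is handled by reducing through (2) to $P=-Q\in\R$, where $\Sigma\equiv 0$. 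Your argument is more conceptual and explains structurally \emph{why} the rank is even, and it would adapt to other division algebras; but the paper's computation buys something yours does not: the explicit inequality $\det J\geq 0$, which is reused later (in the generic rank-$4$ case of Section \ref{ORDERTWO}) to guarantee that the Jacobian of the characteristic map is nonnegative, i.e.\ orientation-preserving, an essential ingredient in the degree count of Theorem \ref{REGULAR}. One small caveat: you use ``similar'' in the conjugacy sense, so matching the theorem's phrasing (equal real parts and norms) silently invokes the classical fact that conjugacy classes in $\HH$ are determined by real part and norm, whereas the paper's determinant computation produces that numerical criterion directly.
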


\begin{proof}
The  matrix associated to $\Sigma$  is
$$J=\begin{bmatrix}
t+s&-x-u&-y-v&-z-w\cr
x+u&t+s&-z+w&y-v\cr
y+v&z-w&t+s&-x+u\cr
z+w&-y+v&x-u&t+s\cr
\end{bmatrix}.$$
Since
\begin{eqnarray}\label{DET}
 \det J=
  (t+s)^4 + 2  (t + s)^2 (x^2 + y^2 + z^2+u^2 + v^2 + w^2) +&&\\ \nonumber
   (x^2 + y^2 + z^2-u^2 - v^2 - w^2 )^2&\geq&0,
  \end{eqnarray}
the matrix $J$ has rank $4$ excepting when $t+s=0$ and $x^2+y^2+z^2=u^2+v^2+w^2$. In this case $J$ is skew-symmetric, hence its rank is even. If $\rank\Sigma=0$ it follows that $x=y=z=0$ and $u=v=w=0$.
\end{proof}

\subsection{Resolution of arbitrary linear equations}\label{ResEcLineales}
More generally, let us consider a linear equation of the form
\begin{equation}\label{ECLINCUAT}
P_1 X Q_1+\cdots+P_n X Q_n=R, \quad \mbox{with\ }P_i,Q_i, R\in\quaternions.
\end{equation}
For each  bilateral term $X\mapsto PXQ$, the matrices $L(P)$ and $R(Q)$ commute, because $P(XQ)=(PX)Q$. Then  $A(P)$ and $B(Q)$ commute too. This implies that  the quaternionic linear equation (\ref{ECLINCUAT}) is equivalent to the real linear system $MX=R$, where $X,R\in\R^4$
and $M$ is the $4\times 4$ real  matrix
$$M=\sum_i{L(P_i)R(Q_i)}=\sum_i{a_ib_i}\id + \sum_i{(a_iB_i+b_iA_i)}+\sum_i{A_iB_i}$$
with $a_i=\Re(P_i)$, $b_i=\Re(Q_i)$, $A_i=A(Q_i)$ and $B_i=B(Q_i)$.
Contrary to the case $n=2$, when $n\geq 3$ the rank of $M$ may  be odd.

\begin{example} {\rm The rank of the matrix associated to the bilateral linear equation $\K X+X(2-\I)-2\J X\J=0$ is $3$.
}\end{example}

\subsection{Determinants}\label{Sdet}
It is possible to generalize to the quaternions the norm $\vert \det \vert$ (that is, with real values) of the complex determinant.
Papers \cite{ASLAKSEN1996,COHENLEO2000}  are surveys of the general theory of quaternionic determinants. For the relationship between Study's determinant and quasideterminants see \cite[pp.~76--85]{GELFAND2005}.
\begin{definition}\label{STUDY}
Let the quaternionic matrix $A\in\M_{n\times n}(\HH)$ be decomposed as
 $A=X+\J Y$ with $X,Y \in\M_{n\times n}(\C)$.
We shall call  \emph{Study's determinant} of $A$ the non-negative real  number
$$\sdet(A) := (\det c(A))^{1/2},$$
where $c(A)$ is the complex matrix
$
c(A)=\begin{bmatrix}
X & -\overline{Y} \cr
Y & \overline{X}\cr
\end{bmatrix}
\in\mathcal{M}_{2n\times 2n}(\mathbb{C}).$
\end{definition}

\begin{remark} {\rm Up to the exponent, this is the same determinant which appears in  Theorem 8.1 of \cite{ZHANG1997}, as well as others considered in  \cite{ASLAKSEN1996}.
We have normalized the exponent to $1/2$ in order to ensure that $\sdet(D)=\vert q_1\ldots q_n\vert $  for diagonal matrices
$D=\diag(q_1,\ldots q_n)$.}
\end{remark}

\begin{proposition}[\cite{COHENLEO2000}]\label{propiedadessdet}
$\sdet$ is the only functional that verifies the  properties:
\begin{enumerate}
\item $\sdet(AB)=\sdet(A)\cdot\sdet(B)$;
\item if $A$ is a complex matrix then $\sdet(A)=\vert\det(A)\vert$.
\end{enumerate}
\end{proposition}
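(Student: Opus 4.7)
To prove the proposition I would separately verify that $\sdet$ satisfies (1)--(2) and then that any other non-negative real functional $f$ with those properties coincides with $\sdet$.

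\textbf{Existence.} The key observation is that $c\colon\M_{n\times n}(\HH)\to\M_{2n\times 2n}(\C)$ is a unital ring homomorphism: identifying $\HH^n$ with $\C^{2n}$ via $v=z+\J w\mapsto(z,w)$, a short calculation using the relation $X\J=\J\bar X$ shows that $c(A)$ is precisely the $\C$-linear matrix of left multiplication by $A$, so $c(AB)=c(A)\,c(B)$ and hence $\det c(AB)=\det c(A)\cdot\det c(B)$. To pass to the square root I need $\det c(A)\geq 0$; a direct check gives $\overline{c(A)}=S\,c(A)\,S^{-1}$ with $S=\begin{bmatrix}0&-I_n\\ I_n&0\end{bmatrix}$, whence $\det c(A)\in\R$, and since $\det c(I)=1$ and $GL_n(\HH)$ is path-connected, continuity forces $\det c(A)>0$ on invertibles and $=0$ on singulars. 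Property (1) follows. Property (2) is immediate: for complex $A$ we have $Y=0$, so $c(A)=\diag(A,\bar A)$ and $\det c(A)=|\det A|^2$.

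\textbf{Uniqueness on elementary matrices.} Let $f$ be any non-negative real functional satisfying (1) and (2). From $f(I)=|\det I|=1$ and multiplicativity, $f$ is invariant under conjugation on the invertible part. The next step is to show that $f(\id+qE_{ij})=1$ for every transvection ($i\neq j$, $q\in\HH$): writing $q=|q|u$ with $u\in\HH$ a unit quaternion, conjugation of $\id+qE_{ij}$ by $\diag(1,\dots,1,u,1,\dots,1)$ (with $u$ in slot $j$) converts it into $\id+|q|E_{ij}$, a real triangular matrix of determinant $1$ whose $f$-value equals $1$ by (2). The conjugator itself has $f$-value $1$, because every unit quaternion is $\HH^*$-conjugate to some $e^{\I\alpha}\in\C$, which has modulus one. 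Permutation matrices also have $f$-value $1$ by (2).

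\textbf{Reduction to diagonal form.} Gauss--Jordan elimination over the skew field $\HH$ factors each invertible $A$ as $A=T_1\cdot\diag(1,\dots,1,q)\cdot T_2$ with $T_1,T_2$ products of transvections and permutations; this is Dieudonn\'e's classical construction. By the preceding paragraph, $f(A)=f(\diag(1,\dots,1,q))$. Decomposing $q=|q|\,u$ with $u$ a unit quaternion and reusing the conjugation trick once more yields $f(\diag(1,\dots,1,q))=|q|$, which equals $\sdet(\diag(1,\dots,1,q))$ by the normalisation noted in the remark preceding the proposition. For singular $A$, Gaussian elimination produces a matrix with a zero row, which factors as $\diag(0,1,\dots,1)\cdot A'$ up to transvections and permutations; the complex singular diagonal factor forces $f(A)=0=\sdet(A)$.

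\textbf{Main obstacle.} The principal technical point is the non-commutative normal form used above: one must check that Gauss--Jordan elimination really works over $\HH$ and that every elementary step is implemented by a left- or right-multiplication by a matrix whose $f$-value has already been controlled, carefully tracking on which side the transvection is applied. Once that combinatorial scaffolding is in place, the rest of the argument is forced by (1) and (2).
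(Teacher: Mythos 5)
The paper offers no proof of this proposition at all: it is stated with an attribution to Cohen--De Leo \cite{COHENLEO2000} and used as a black box, so there is no internal argument to compare yours against. Your proposal is, as far as I can check, a correct self-contained reconstruction, and it follows the standard route that the cited reference also takes in spirit: multiplicativity of $\sdet$ from the fact that $c(A)$ is the matrix of left multiplication by $A$ under $\HH^n\cong\C^{2n}$ (so $c$ is a ring homomorphism), reality of $\det c(A)$ from the similarity $\overline{c(A)}=S\,c(A)\,S^{-1}$, positivity on invertibles from connectedness of $GL_n(\HH)$, property (2) from $c(A)=\diag(A,\overline{A})$, and uniqueness by Dieudonn\'e-style elimination: every invertible matrix factors through transvections, permutations and $\diag(1,\dots,1,q)$, and on each such factor the axioms (1)--(2), conjugation invariance, and the fact that every quaternion is $\HH^*$-conjugate to a complex number pin down the value of any admissible functional $f$, hence $f=\sdet$. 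What your version buys is that Section 2 of the paper would become self-contained; what the paper's choice buys is brevity. One small repair is needed: your sentence attributes $\det c(A)=0$ for singular $A$ to ``continuity,'' but connectedness and continuity only give positivity on the invertible part; the vanishing on singular matrices needs the separate (easy) observation that a non-invertible $A$ induces a non-bijective left-multiplication operator on $\C^{2n}$, so $c(A)$ has nontrivial kernel. Similarly, in the singular branch of the uniqueness argument it is cleanest to note that a matrix $R$ with a zero row satisfies $R=\diag(1,\dots,0,\dots,1)R$, so multiplicativity and (2) force $f(R)=0$ directly. With these trivial patches the argument is complete.
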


The following immediate consequences are very useful for computations.

\begin{corollary}\label{PropSdet2}
\begin{enumerate}
\item $\sdet(A)>0$ if and only if the matrix $A$ is invertible;
\item let $A$ and $B=PAP^{-1}$ be similar matrices, then $\sdet(A)=\sdet(B)$;
\item
$\sdet(A)$ does not change when a (right)  multiple of one column  is added to another column;
\item
$\sdet(A)$ does not change when a (left) multiple of one row is added to another row;
 \item
$\sdet(A)$ does not change when two columns (or two rows) are permuted.
\end{enumerate}
\end{corollary}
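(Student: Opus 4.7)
The plan is to derive all five items from Proposition~\ref{propiedadessdet}, i.e.\ from multiplicativity $\sdet(AB)=\sdet(A)\sdet(B)$ together with the normalization $\sdet(A)=|\det A|$ on complex matrices; in particular $\sdet(\id)=1$, and since $\sdet$ takes values in the commutative semigroup $\R_{\geq 0}$, the familiar manipulations from the commutative theory will apply verbatim.

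For item~(1), I would observe that invertibility of $A$ gives $\sdet(A)\sdet(A^{-1})=\sdet(\id)=1$, hence $\sdet(A)>0$. For the converse, I would exhibit a nonzero element of $\ker c(A)$ when $A$ is singular: writing $A=X+\J Y$ and picking $v=v_1+\J v_2\neq 0$ with $Av=0$, the identity $\J z=\bar z\,\J$ for $z\in\C$ translates $Av=0$ into the complex system $Xv_1-\overline{Y}v_2=0$, $Yv_1+\overline{X}v_2=0$; this places $(v_1,v_2)^{\top}$ in $\ker c(A)$, forcing $\det c(A)=0$ and $\sdet(A)=0$. Item~(2) is then a one-liner: since $\sdet$ takes real values the scalars commute, so $\sdet(PAP^{-1})=\sdet(P)\sdet(P^{-1})\sdet(A)=\sdet(A)$.

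For items~(3)--(5), the key observation is that each elementary row or column operation on $A$ is realized by multiplication on the appropriate side by an elementary matrix $E$, so by multiplicativity it suffices to show $\sdet(E)=1$ case by case. Permutation matrices (item~(5)) are real with $|\det|=1$, so $\sdet=1$ is immediate. The genuine work is for a transvection $E=\id+q\,U_{ij}$ with $i\neq j$, where $U_{ij}$ denotes the matrix unit with a $1$ in position $(i,j)$: splitting $q=a+\J b$ with $a,b\in\C$, the perturbation $c(E)-\id$ has rank at most~$2$ and factors as $UV^{\top}$, with the columns of $V$ supported at positions $\{j,n+j\}$ and those of $U$ at positions $\{i,n+i\}$. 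The plan is then to apply the matrix determinant lemma $\det(\id+UV^{\top})=\det(\id+V^{\top}U)$ and to observe that $V^{\top}U$ is the zero $2\times 2$ matrix precisely because $i\neq j$; hence $\det c(E)=1$ and $\sdet(E)=1$.

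The main obstacle—and really the only non-formal step—is this last calculation: verifying $V^{\top}U=0$, equivalently $\sdet(E)=1$, for the transvections. Everything else is a mechanical consequence of multiplicativity and the normalization of $\sdet$ on complex matrices, so once the elementary-matrix case is handled, all five items fall out immediately.
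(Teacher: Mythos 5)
Your proposal is correct and matches the paper's approach: the paper offers no explicit proof, stating these as immediate consequences of Proposition~\ref{propiedadessdet}, and your argument is exactly that derivation with the details filled in (multiplicativity plus the complex normalization, reduced to checking $\sdet(E)=1$ for elementary matrices). The two computations the paper leaves implicit—$\det c(E)=1$ for a transvection $E=\id+q\,U_{ij}$ with $i\neq j$, and the singular direction of item (1)—are handled correctly by your disjoint-support/determinant-lemma argument and by exhibiting a nonzero kernel vector of $c(A)$, respectively.
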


We shall need the following result too (we have not found it explicitly in the literature):

\begin{proposition}\label{sdet por cajas}
For any matrix with two boxes $A,B$ of order $m$ and $n$ respectively, it holds that
$
\sdet\begin{bmatrix}
A & 0 \cr* & B\cr
\end{bmatrix}=\sdet(A)\cdot\sdet(B)
$.
\end{proposition}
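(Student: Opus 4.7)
The plan is to reduce the claim to the corresponding determinant identity for the complex companion matrix $c(\cdot)$, by exploiting the block structure that $c$ inherits from the block structure of the original quaternionic matrix.

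First I would write $M=\begin{bmatrix}A & 0 \\ C & B\end{bmatrix}$ (naming the lower-left box $C$) and decompose each quaternionic block in the form $A=X_A+\J Y_A$, $B=X_B+\J Y_B$, $C=X_C+\J Y_C$, with the $X$'s and $Y$'s complex. Because the decomposition is additive and preserves block structure, the complex matrices $X_M, Y_M$ associated to $M$ are themselves block lower triangular:
\[
X_M=\begin{bmatrix} X_A & 0 \\ X_C & X_B \end{bmatrix},\qquad Y_M=\begin{bmatrix} Y_A & 0 \\ Y_C & Y_B \end{bmatrix},
\]
and similarly for $\overline{X_M}$ and $\overline{Y_M}$.

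Next I would write out $c(M)$ as a $2(m+n)\times 2(m+n)$ complex matrix in the four $(m+n)\times(m+n)$ blocks prescribed by Definition \ref{STUDY}, and then apply a single permutation $\sigma$ to both the rows and the columns that reorders the indices as $(1,\dots,m,\,m+n+1,\dots,2m+n,\,m+1,\dots,m+n,\,2m+n+1,\dots,2(m+n))$. Since $\sigma$ is applied simultaneously to rows and columns, the sign contributions cancel and the determinant is unchanged. A direct inspection of the resulting matrix shows it is block lower triangular of the form
\[
\begin{bmatrix}
X_A & -\overline{Y_A} & 0 & 0 \\
Y_A & \overline{X_A} & 0 & 0 \\
X_C & -\overline{Y_C} & X_B & -\overline{Y_B} \\
Y_C & \overline{X_C} & Y_B & \overline{X_B}
\end{bmatrix},
\]
whose diagonal blocks are exactly $c(A)$ and $c(B)$. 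Applying the classical (commutative) block-triangular formula gives $\det c(M)=\det c(A)\cdot\det c(B)$, and taking positive square roots yields $\sdet(M)=\sdet(A)\cdot\sdet(B)$, since all three quantities are non-negative by Definition \ref{STUDY}.

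The main obstacle is purely bookkeeping: verifying that the simultaneous row/column permutation really produces the claimed block lower triangular pattern, and that the diagonal blocks reassemble into $c(A)$ and $c(B)$ in the correct orientation (so that the off-diagonal blocks involving $\overline{Y_A},\overline{Y_B}$ sit in the right corners and do not spill into the off-triangular part). Once the index layout is set up carefully this is immediate; no separate case analysis for $A$ or $B$ singular is needed, because the identity $\det c(M)=\det c(A)\det c(B)$ holds unconditionally and the square-root step preserves equality whether the factors vanish or not.
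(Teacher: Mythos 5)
Your proof is correct. In fact, the paper states Proposition \ref{sdet por cajas} \emph{without} proof --- the authors only remark that they could not find the result explicitly in the literature --- so there is no argument of theirs to compare against; your proposal supplies the missing proof, and it does so by the natural route: working directly from Definition \ref{STUDY}. The key steps all check out. The decomposition $M=X_M+\J Y_M$ does inherit the block lower triangular shape, and conjugation does the same for $\overline{X_M}$, $\overline{Y_M}$. Conjugating $c(M)$ by the permutation matrix $P$ of $\sigma$ leaves the determinant unchanged, since $\det(P)\det(P^{T})=1$, and the reordered matrix is precisely
\[
\begin{bmatrix}
c(A) & 0 \\ \ast & c(B)
\end{bmatrix},
\]
so the classical block-triangular formula gives $\det c(M)=\det c(A)\det c(B)$ unconditionally, with no invertibility hypothesis needed. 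The only implicit ingredient in your final step is that $\det c(A)$ and $\det c(B)$ are non-negative real numbers, so that $\bigl(\det c(A)\det c(B)\bigr)^{1/2}=\bigl(\det c(A)\bigr)^{1/2}\bigl(\det c(B)\bigr)^{1/2}$; this is exactly what Definition \ref{STUDY} presupposes in declaring $\sdet$ a non-negative real number, so it is legitimate to invoke it. One could instead try to argue axiomatically from the multiplicativity of $\sdet$ (Proposition \ref{propiedadessdet}) via the factorization
\[
\begin{bmatrix} A & 0 \\ C & B \end{bmatrix}
=\begin{bmatrix} A & 0 \\ C & I_n \end{bmatrix}
\begin{bmatrix} I_m & 0 \\ 0 & B \end{bmatrix}
\]
together with the column operations of Corollary \ref{PropSdet2}, but evaluating $\sdet$ of the resulting block-diagonal matrices with identity blocks bottoms out in the same permutation computation on $c(\cdot)$ that you perform, so your direct argument is the cleaner one.
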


\subsection{Jacobi identity}\label{identidad Jacobi}
Let $C$ be a complex $n\times n$ matrix.
Let $I=\{i_1,\dots,i_p\}$ and $J=\{j_1,\dots,j_p\}$ be two subsets of $\{1,\dots,n\}$  with the same size $p$.  Let us denote by $C_{I,J}$ the submatrix formed by the rows with index in $I$ and the columns with index in $J$. On the other side, let us denote by $C^{I,J}$ the complementary matrix obtained by suppressing the rows in  $I$ and the columns in $J$.

The following {\em Jacobi identity}  is attributed to Kronecker in \cite{NANSON1900}.
\begin{lemma}\label{KRON} Assume that the complex matrix $C$ is invertible. Then
$$\det{(C^{-1})_{I,J}}=(-1)^{I+J}\det{C^{J,I}}/\det C,$$
where $I+J$ means $i_1+\cdots+i_p+j_1+\cdots+j_p$.
\end{lemma}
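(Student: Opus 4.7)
The plan is to reduce to the canonical case $I=J=\{1,\dots,p\}$ by row and column permutations, and then apply the Schur complement formula. In the canonical case, partition $C$ as a $2\times 2$ block matrix
$$C=\begin{bmatrix} C_{11} & C_{12} \\ C_{21} & C_{22} \end{bmatrix}$$
with $C_{11}$ of size $p\times p$. Then $C^{I,J}=C_{22}$ and $(C^{-1})_{I,J}$ is the top-left $p\times p$ block of $C^{-1}$. Assuming $C_{22}$ invertible, the standard block inversion formula gives $(C^{-1})_{I,J}=(C_{11}-C_{12}C_{22}^{-1}C_{21})^{-1}$, together with $\det C=\det C_{22}\cdot\det(C_{11}-C_{12}C_{22}^{-1}C_{21})$. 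Taking determinants of the first identity and dividing yields $\det(C^{-1})_{I,J}=\det C_{22}/\det C=\det C^{I,J}/\det C$. The sign $(-1)^{I+J}$ is $+1$ here since $I+J=2\cdot p(p+1)/2$ is even, so the formula holds.

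For the general case, choose permutations $\sigma,\tau$ of $\{1,\dots,n\}$ that move the elements of $I$, respectively $J$, to positions $1,\dots,p$ while preserving the relative order within $I$ and within its complement (and analogously for $J$). A standard insertion count gives $\sgn(\sigma)=(-1)^{\sum I-p(p+1)/2}$, and similarly for $\tau$, so
$$\sgn(\sigma)\sgn(\tau)=(-1)^{\sum I+\sum J}=(-1)^{I+J},$$
using that $p(p+1)$ is always even. Let $\tilde C=P_\sigma C P_\tau^{-1}$. Then $\det\tilde C=(-1)^{I+J}\det C$, the top-left $p\times p$ block of $\tilde C$ is $C_{I,J}$, and its bottom-right $(n-p)\times(n-p)$ complementary block is $C^{I,J}$. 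Since $\tilde C^{-1}=P_\tau C^{-1}P_\sigma^{-1}$, inversion interchanges the roles of $I$ and $J$, so the top-left $p\times p$ block of $\tilde C^{-1}$ is $(C^{-1})_{J,I}$. Applying the canonical identity to $\tilde C$ gives
$$\det(C^{-1})_{J,I}=\frac{\det C^{I,J}}{\det\tilde C}=(-1)^{I+J}\frac{\det C^{I,J}}{\det C},$$
which is precisely the stated identity after relabelling $I\leftrightarrow J$ (both sides are symmetric under this swap).

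The main obstacle is the sign bookkeeping. One must correctly identify $\sgn(\sigma)\sgn(\tau)$ as $(-1)^{I+J}$, and, more subtly, keep straight the fact that passing to the inverse exchanges the roles of rows and columns, so that the top-left block of $\tilde C^{-1}$ is indexed by $J$ on the row side and by $I$ on the column side (which matches the asymmetry between $(C^{-1})_{I,J}$ and $C^{J,I}$ in the statement). A minor technical point is the invertibility of $C_{22}$ used in the Schur complement step; this is handled by a standard density argument: perturb $C$ to $C+\varepsilon E$ with $E$ chosen so that both $C+\varepsilon E$ and the corresponding sub-block are invertible for all small $\varepsilon\neq 0$, apply the identity there, and then let $\varepsilon\to 0$ using the continuity of $\det$ and of matrix inversion on the open set of invertible matrices.
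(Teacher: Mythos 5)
Your proof is correct. For comparison: the paper itself offers no proof of Lemma \ref{KRON} at all --- it is quoted as a classical identity, attributed to Kronecker via \cite{NANSON1900}, and the authors only prove its quaternionic analogue (Proposition \ref{JACOBI}) by reducing Study's determinant to this complex statement. So your argument supplies a proof the paper delegates to the literature, and it is a sound one: the inversion count $\sum_k(i_k-k)$ for the insertion permutation gives $\sgn(\sigma)\sgn(\tau)=(-1)^{I+J}$ as you claim; the observation that $\tilde C^{-1}=P_\tau C^{-1}P_\sigma^{-1}$ interchanges the roles of $I$ and $J$ correctly accounts for the transposed indices in $C^{J,I}$; and the final relabelling $I\leftrightarrow J$ is legitimate because the identity has been established for all pairs of $p$-subsets and the sign is symmetric under the swap. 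One simplification worth knowing: the Schur complement, and with it the perturbation/density step, can be avoided entirely. Writing $D=C^{-1}$ in block form, the equation $CD=\id$ gives
$$
C\begin{bmatrix} D_{11} & 0 \\ D_{21} & \id \end{bmatrix}
=\begin{bmatrix} \id & C_{12} \\ 0 & C_{22} \end{bmatrix},
$$
and taking determinants of both sides (each factor being block triangular) yields $\det C\cdot\det D_{11}=\det C_{22}$ directly, with no invertibility hypothesis on $C_{22}$. This reduces your canonical case to two lines and makes the whole proof purely algebraic, whereas your version, while correct, needs the continuity of $\det$ and of matrix inversion to remove the auxiliary hypothesis.
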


A generalization to quasideterminants appears in \cite[Theorem 1.5.4, p.~74]{GELFAND2005}, see also Section \ref{BACK}.
We shall use Study's determinant to establish an analogous result in the quaternionic setting.
\begin{proposition}\label{JACOBI} Let $A$ be an invertible quaternionic matrix. Then
 $$\sdet{(A^{-1})_{I,J}}=\sdet{A^{J,I}}/\sdet A.$$
\end{proposition}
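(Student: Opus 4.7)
My plan is to lift the identity to the complex matrix $c(A)$ and apply the standard complex Jacobi identity of Lemma~\ref{KRON}. The scheme is: check that $c$ is a ring homomorphism, show that submatrices and complementary submatrices commute with the block construction defining $c$, invoke Jacobi, and take a square root using the definition of $\sdet$.

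First I would verify (by a direct computation using $z\J=\J\bar z$ for $z\in\C$) that $c(AB)=c(A)c(B)$, so that $c(A^{-1})=c(A)^{-1}$. Writing $A=X+\J Y$ and $A^{-1}=X'+\J Y'$, introduce the ``doubled'' index sets $\tilde I=I\cup(n+I)$ and $\tilde J=J\cup(n+J)$, where $n+I=\{n+i:i\in I\}$. The key bookkeeping lemma, which follows immediately from the definition of $c$ and from the fact that entrywise conjugation commutes with taking submatrices, is the pair of identifications
\begin{equation*}
(c(A)^{-1})_{\tilde I,\tilde J}=c\bigl((A^{-1})_{I,J}\bigr),\qquad c(A)^{\tilde J,\tilde I}=c(A^{J,I}).
\end{equation*}
Combined with Definition~\ref{STUDY}, these give
\begin{equation*}
\det(c(A)^{-1})_{\tilde I,\tilde J}=\sdet\bigl((A^{-1})_{I,J}\bigr)^2,\quad \det c(A)^{\tilde J,\tilde I}=\sdet(A^{J,I})^2,\quad \det c(A)=\sdet(A)^2.
\end{equation*}

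Next, I would apply Lemma~\ref{KRON} to $C=c(A)$ with index sets $\tilde I,\tilde J$:
\begin{equation*}
\det(c(A)^{-1})_{\tilde I,\tilde J}=(-1)^{\tilde I+\tilde J}\det c(A)^{\tilde J,\tilde I}/\det c(A).
\end{equation*}
The sign is $+1$: with $p=|I|=|J|$,
\begin{equation*}
\tilde I+\tilde J=\sum_{i\in I}\bigl(i+(n+i)\bigr)+\sum_{j\in J}\bigl(j+(n+j)\bigr)=2\Bigl(\sum I+\sum J\Bigr)+2np,
\end{equation*}
which is even. Substituting the three Study-determinant equalities into the Jacobi identity yields
\begin{equation*}
\sdet\bigl((A^{-1})_{I,J}\bigr)^2=\sdet(A^{J,I})^2/\sdet(A)^2,
\end{equation*}
and since $\sdet$ takes non-negative values, taking square roots gives the proposition.

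The only real obstacle is the index bookkeeping: one has to be careful that the doubled sets $\tilde I,\tilde J$ produce exactly the submatrices arising from $c\bigl((A^{-1})_{I,J}\bigr)$ and $c(A^{J,I})$, including the correct placement of the minus sign and conjugations in the off-diagonal blocks. Once that is checked, both the sign $(-1)^{\tilde I+\tilde J}=1$ and the division by $\sdet(A)$ come out on the nose.
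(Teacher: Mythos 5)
Your proposal is correct and follows essentially the same route as the paper: lift to the complex matrix $c(A)$, identify $c\bigl((A^{-1})_{I,J}\bigr)$ and $c(A^{J,I})$ with the submatrix and complementary submatrix of $c(A)$ indexed by the doubled sets $I\cup(I+n)$, $J\cup(J+n)$, apply Lemma~\ref{KRON}, and take square roots. The paper leaves the sign computation and the multiplicativity of $c$ implicit, whereas you spell both out, which is a welcome addition of detail rather than a difference in method.
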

\begin{proof}
If $I=\{i_1,\dots,i_p\}$ we denote
$I^\prime=I+n=\{i_1+n,\dots,i_p+n\}$; analogously
 $J^\prime=J+n$. The result follows from Lemma \ref{KRON} because
$$c\left((A^{-1})_{I,J}\right)=c(A^{-1})_{I\cup I^\prime,J\cup J^\prime}=\left(c(A)^{-1}\right)_{I\cup I^\prime,J\cup J^\prime}$$
and $$c(A^{J,I})=c(A)^{J\cup J^\prime, I\cup I^\prime}.$$ \end{proof}

\section{CHARACTERISTIC EQUATION}\label{CHARSECT} The problem we are proposing here is to find a characteristic map for the left eigenvalues of a given matrix $A$, that is to find a map $\mu_A\colon \HH \to \HH$ such that $\mu_A(\lambda)=0$ if and only if $\lambda$ is a left eigenvalue of $A$. Notice that the function $\sdet(A-\lambda\id)$ is real-valued, so it is not of interest from the point of view of the topological degree, nor it is solvable in any obvious way.

\subsection{Left eigenvalues}
Let $A$ be a  matrix with quaternionic coefficients.

\begin{definition} The quaternion $\lambda\in\HH$ is a {\em left eigenvalue} of $A$ if the matrix $A-\lambda \id$ is not invertible, or equivalently $\sdet(A-\lambda \id)=0$.
\end{definition}

Let  $\sigma_l(A)$ be the left spectrum, i.e. the set of left eigenvalues, of the matrix $A$.

\begin{proposition} The spectrum $\sigma_l(A)$ is compact.
\end{proposition}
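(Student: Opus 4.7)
The plan is to verify the two halves of the Heine--Borel characterisation: $\sigma_l(A)$ is closed and bounded in $\HH\cong\R^4$.

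For closedness, the key observation is that the function $F\colon\HH\to\R$ defined by $F(\lambda)=\sdet(A-\lambda\id)$ is continuous. Indeed, Study's determinant is, by Definition \ref{STUDY}, the square root of a polynomial in the real and imaginary components of the entries of $A-\lambda\id$, hence continuous in $\lambda$ after writing $\lambda=t+x\I+y\J+z\K$ in the coordinates on $\HH\cong\R^4$. Since $\sigma_l(A)=F^{-1}(0)$ by the preceding definition and Corollary~\ref{PropSdet2}(1), the spectrum is the preimage of a closed set under a continuous map, hence closed.

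For boundedness, I would use the eigenvector characterisation. If $\lambda\in\sigma_l(A)$, then by Corollary~\ref{PropSdet2}(1) the matrix $A-\lambda\id$ fails to be invertible, so there exists a nonzero vector $v\in\HH^n$ with $Av=\lambda v$. Equipping $\HH^n\cong\R^{4n}$ with its Euclidean norm, one has $|\lambda v|=|\lambda|\,|v|$ componentwise (because $|\lambda q|=|\lambda||q|$ for every quaternion $q$), so
$$|\lambda|\,|v|=|Av|\le\|A\|\,|v|,$$
where $\|A\|$ is the operator norm of left multiplication by $A$ on $\HH^n$. Dividing by $|v|>0$ yields $|\lambda|\le\|A\|$, so $\sigma_l(A)$ is contained in the closed ball of radius $\|A\|$ in $\HH$.

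I do not expect any serious obstacle here; the only point that merits care is the justification that left multiplication by a quaternion scalar is an isometry of the Euclidean structure on $\HH^n$, which is what allows the otherwise classical argument to go through in the noncommutative setting. An alternative route, avoiding the choice of a matrix norm, would be to write $A-\lambda\id=-\lambda(\id-\lambda^{-1}A)$ for $\lambda\ne 0$ and use multiplicativity of $\sdet$ (Proposition~\ref{propiedadessdet}(1)) together with continuity to conclude that $\sdet(A-\lambda\id)>0$ once $|\lambda|$ is sufficiently large, but the eigenvector argument is shorter and more transparent.
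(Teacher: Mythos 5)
Your proof is correct and follows essentially the same route as the paper: closedness from the vanishing of the continuous function $\lambda\mapsto\sdet(A-\lambda\id)$, and boundedness via an eigenvector $v\neq 0$ with $Av=\lambda v$, giving $\vert\lambda\vert\leq\Vert A\Vert$. Your added justifications (continuity of $\sdet$ in the coordinates of $\lambda$, multiplicativity $\vert\lambda q\vert=\vert\lambda\vert\,\vert q\vert$) are details the paper leaves implicit, not a different argument.
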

\begin{proof}
The spectrum is a closed set because it is given by the equation  $\sdet(A-\lambda\id)=0$. It is bounded because $\lambda\in\sigma_l(A)$ if and only if there exists $v\in\HH^n$, $v\neq 0$, such that $Av=\lambda v$;
then
$$
  \vert\lambda\vert=\frac{\vert \lambda v\vert }{\vert v\vert}\leq\sup\limits_{\vert w \vert=1}\frac{\vert Aw\vert}{\vert w\vert} =\vert  A \vert.
$$
\end{proof}

\begin{proposition}\label{INVERSO} Let $B$ be an invertible matrix. Then $\lambda\in\sigma_l(B)$ if and only if $\lambda^{-1}\in \sigma_l(B^{-1})$.
\end{proposition}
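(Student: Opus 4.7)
The plan is to proceed by exhibiting eigenvectors, taking care of the noncommutativity of quaternion scalars with matrix entries. First, since $B$ is invertible, $\sdet(B) > 0$ by Corollary \ref{PropSdet2}, so $0 \notin \sigma_l(B)$; for the same reason $0 \notin \sigma_l(B^{-1})$. Consequently any left eigenvalue $\lambda$ under consideration satisfies $\lambda \neq 0$, and $\lambda^{-1}$ makes sense.

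For the forward direction, suppose $\lambda \in \sigma_l(B)$ and pick a nonzero vector $v \in \HH^n$ with $Bv = \lambda v$. Applying $B^{-1}$ on the left (which is valid by the associativity of matrix--matrix and matrix--vector multiplication, since $B^{-1}(Bv) = (B^{-1}B)v = v$), I get $v = B^{-1}(\lambda v)$. But trivially $v = \lambda^{-1}(\lambda v)$, so subtracting yields
$$(B^{-1} - \lambda^{-1}\id)(\lambda v) = 0.$$
Since $\lambda \neq 0$ and $v \neq 0$, the vector $\lambda v$ is nonzero, so $B^{-1} - \lambda^{-1}\id$ has a nontrivial kernel and hence is not invertible. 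Thus $\lambda^{-1} \in \sigma_l(B^{-1})$. The converse follows by applying the same argument to $B^{-1}$ in place of $B$ and $\lambda^{-1}$ in place of $\lambda$, using $(B^{-1})^{-1} = B$ and $(\lambda^{-1})^{-1} = \lambda$.

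The only subtlety, and the step to be watched carefully, is that one must never commute $\lambda$ past entries of $B^{-1}$: the derivation above uses only matrix--vector associativity and the componentwise definition of scalar multiplication. As an alternative in the spirit of the rest of the paper, one could instead verify the matrix identity $B - \lambda\id = -\lambda(B^{-1} - \lambda^{-1}\id)B$ (which is legitimate because the left-associativity $(\lambda M)N = \lambda(MN)$ does hold) and then apply $\sdet$, using Proposition \ref{propiedadessdet} to conclude that $\sdet(B - \lambda\id) = |\lambda|^n\,\sdet(B^{-1}-\lambda^{-1}\id)\,\sdet(B)$, so the two determinants vanish simultaneously.
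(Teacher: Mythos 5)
Your proof is correct and takes essentially the same approach as the paper: the paper's one-line argument, $B^{-1}(\lambda v)=B^{-1}Bv=v=\lambda^{-1}(\lambda v)$, is exactly your forward direction (exhibiting $\lambda v$ as an eigenvector of $B^{-1}$ for $\lambda^{-1}$), with the converse left implicit by symmetry. Your extra care about $\lambda\neq 0$ and $\lambda v\neq 0$, and the alternative identity $B-\lambda\id=-\lambda(B^{-1}-\lambda^{-1}\id)B$ followed by $\sdet$ (which mirrors the technique used in Proposition \ref{CARINVERSO}), are sound additions but not needed beyond what the paper does.
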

\begin{proof}
If $Bv=\lambda v$ then $B^{-1}(\lambda v)=B^{-1}Bv=v=\lambda^{-1}(\lambda v).$
\end{proof}

\subsection{Background}\label{BACK}
When the matrix $A$ is hermitian, all left eigenvalues are real numbers so they coincide with the  right eigenvalues \cite{FARENICK}. Moreover it is possible to define a true determinant for hermitian matrices \cite{KYRCHEI,ZHANG1997},
which allows to construct a characteristic polynomial
$p(t)=\det(A-t\id)$ with real variable. 

For the general case,   $\sdet(A)$ equals, up to an exponent,  the determinant of $AA^*$.
On the other hand,    Zhang \cite{ZHANG1997} pointed out that if the quaternionic matrix is decomposed as $A= X+\J Y$, with $X,Y\in\M_{n\times n}(\complex)$, then its left eigenvalues  $\lambda=x+\J y$, with $x,y\in\C$, are the roots of the function
$\sigma\colon \C\times \C \to \R$ given by
\begin{equation}\label{FINAL}
\sigma(x,y)=\det
\begin{bmatrix}
X-x\id  & -\overline{Y}+\overline{y}\id \cr
Y-y\id  & \overline{X}-\overline{x}\id \cr
\end{bmatrix}.
\end{equation}
This is equivalent to the equation $\sdet(A-\lambda\id)=0$.

Another approach is due to Gelfand {\it et al.}  \cite{GELFAND2005}. To each matrix $A\in\M_{n\times n}(\quaternions),$ they associated $n^2$ functions, that we shall call {\em quasicharacteristic functions}, defined by
$$f_{ij}(\lambda)=\vert \lambda\id-A\vert _{ij}, \quad 1\leq i,j \leq n,$$
where  $\vert \cdot \vert _{ij}$ is the $(i,j)-$quasideterminant.
Let us denote by
$A^{i,j}$ the submatrix of order $(n-1)$ obtained by suppressing the row $i$ and the column $j$ in the matrix $A\in\M_{n\times n}(\HH)$. Then quasideterminants are defined inductively by the formula
$$\vert A\vert _{ij}=a_{ij}-\sum a_{iq}(\vert A^{i,j}\vert _{pq})^{-1}a_{pj},$$
where the sum is taken over the indices  $p,q\in\{1,\ldots,n\}$ with $p\neq i, q\neq j$, such that the quasideterminant of  lower order $\vert A^{i,j}\vert _{pq}$ is defined and it is non-null (see Proposition 1.5 of  \cite{GELFAND1992}).\label{QUASIDETERM}

When the matrix $A$ is invertible, the entries of the inverse matrix $A^{-1}$ are exactly $a^{ij}=\vert A\vert _{ji}^{-1}$. In the commutative case this gives the well known formula $a^{ij}=(-1)^{i+j}\det A^{j,i}/\det A$. For quaternionic matrices, the norm of the quasideterminant $\vert A\vert _{ij}$ of $A\in\M_{n\times n}(\HH)$ verifies
\begin{equation}\label{MINIJACOBI}
 \left\vert  \vert A\vert _{ij}\right\vert\cdot\sdet(A^{i,j}) =\sdet(A).
\end{equation}
This is a particular case of Jacobi identity for quaternions (Proposition \ref{JACOBI}).

\begin{remark} {\rm
From Equation (\ref{MINIJACOBI}) it follows that the roots of the quasicharacteristic functions are left eigenvalues. However none of those functions gives the complete spectrum, as shown in the next Example. Also notice that the definition of {\em noncommutative left eigenvalue} considered in \cite[subsection 8.2, p.~128]{GELFAND2005} does not correspond to the notion we are discussing here.
}\end{remark}
\begin{example}{\rm
Let  $A=\begin{bmatrix}
\I & 0 & 0 \cr
 \K & \J & 0 \cr
 -3\I & 2\K & \K\cr
 \end{bmatrix}$. Then $\sigma_l(A)=
\{ \I,\J,\K\}$. The quasi-characteristic functions are
\begin{eqnarray*}
f_{11}(\lambda)&=&\lambda-\I,\\
f_{12}(\lambda)&=&-(\lambda-\I)\K(\lambda-\J),\\
f_{13}(\lambda)&=&-(\lambda-\I)\left(3\I-2\K(\lambda-\J)^{-1}\K\right)^{-1}(\lambda-\K),\\
f_{22}(\lambda)&=&\lambda-\J,\\
f_{23}(\lambda)&=&-\frac{1}{2}(\lambda-\J)\K(\lambda-\K),\\
f_{32}(\lambda)&=&-2\K-(\lambda-\K)\K(\lambda-\J),\\
f_{33}(\lambda)&=&\lambda-\K,
\end{eqnarray*}
while $f_{21}(\lambda)$ and $f_{31}(\lambda)$ are not defined.
}\end{example}

\subsection{Characteristic map}\label{DefFC}
We now introduce the notion of a characteristic map whose roots are the left eigenvalues, thus generalizing the usual characteristic polynomial in the real and complex settings. As we shall see this notion fits naturally with the equation of order $2$ given by Wood \cite{WOOD}, as well as with the procedure proposed by  So \cite {SO2005} in order to compute the left eigenvalues of $3\times 3$ matrices.
\begin{definition}
The map $\mu: \HH \to\HH$ is a \emph{characteristic map} of the matrix $A\in\M_{n\times n}(\quaternions)$ if, up to a constant, its norm verifies $\vert \mu(\lambda)\vert =\sdet(A-\lambda\id)$ for all  $\lambda \in\HH$.
\end{definition}

\begin{example}
{\rm Let
$D=\diag (q_1,\ldots,q_n)$ be a diagonal matrix. Then
$\mu(\lambda)=(q_1-\lambda)\cdots(q_n-\lambda)$
is a characteristic map for $D$. Analogously for a triangular matrix.
}\end{example}

Let us start with the
 $2\times 2$ matrix $A=\begin{bmatrix}
a&b\cr
c&d\cr
\end{bmatrix}
$. If  $A$ is a diagonal matrix, then $\sigma_l(A)$ reduces to the elements in the diagonal. Otherwise,  we can always suppose that $b\neq 0$ (see Remark \ref{PERMUTA}). Moreover,
$\sdet(A-\lambda \id)$  does not change  after elementary transformations (Corollary \ref{PropSdet2}), for instance
$$\sdet{\begin{bmatrix}
a-\lambda&b\cr
c&d-\lambda\cr
\end{bmatrix}}
=
\sdet{\begin{bmatrix}
0&b\cr
c-(d-\lambda)b^{-1}(a-\lambda)&d-\lambda\cr
\end{bmatrix}}.$$
Then, as pointed out by Wood,  computing the left spectrum is equivalent to finding the roots of a characteristic map like
\begin{equation}\label{CARMAP}
\mu(\lambda)=c-(d-\lambda)b^{-1}(a-\lambda).
\end{equation}
\begin{remark} {\rm Huang \cite{HUANG2000} proposed another map when $c\neq 0$, namely
$(\lambda-a)c^{-1}(\lambda-d)-b$.
This polynomial is obtained by adding  $(\lambda-a)c^{-1}$ by the second row  to the first row. This expression is equivalent to
$b-(a-\lambda)c^{-1}(d-\lambda)$,
which is the one given by Wood at the end of \cite{WOOD} (there is a misprint in the original paper).
}\end{remark}

The left spectrum is not invariant by similarity. However, we shall use the following fact:
\begin{proposition} If $P$ is an invertible {\em real} matrix then
$\sdet(A-\lambda\id)=\sdet(PAP^{-1}-\lambda\id)$.
Hence $A$ and $PAP^{-1}$ have the same characteristic maps.
\end{proposition}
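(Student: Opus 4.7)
The plan is to reduce the problem to a direct application of the similarity invariance of Study's determinant (Corollary \ref{PropSdet2}(2)). The key observation is that although the left spectrum is not in general invariant under conjugation by a quaternionic matrix, the hypothesis that $P$ has \emph{real} entries gives us something much stronger: $P$ commutes with every quaternionic scalar matrix.

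More precisely, first I would note that since each entry $p_{ij}$ of $P$ lies in $\R$, it commutes with any quaternion $\lambda \in \HH$. Writing out a matrix product entry by entry, this means $P(\lambda\id) = (\lambda\id)P$, and hence $P(\lambda\id)P^{-1} = \lambda\id$. Consequently
\[
PAP^{-1} - \lambda\id \;=\; PAP^{-1} - P(\lambda\id)P^{-1} \;=\; P(A - \lambda\id)P^{-1},
\]
so $PAP^{-1} - \lambda\id$ is similar (as a quaternionic matrix) to $A - \lambda\id$.

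Next I would invoke Corollary \ref{PropSdet2}(2) directly to conclude that
\[
\sdet(PAP^{-1} - \lambda\id) \;=\; \sdet\bigl(P(A-\lambda\id)P^{-1}\bigr) \;=\; \sdet(A - \lambda\id).
\]
For the second assertion, if $\mu_A$ is a characteristic map of $A$, then by definition $|\mu_A(\lambda)|$ equals $\sdet(A-\lambda\id)$ up to a constant, and by what we just proved this also equals $\sdet(PAP^{-1}-\lambda\id)$ up to the same constant; hence $\mu_A$ serves as a characteristic map of $PAP^{-1}$ as well.

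There is no real obstacle here; the whole content is the remark that reality of $P$ is precisely the condition that makes $P$ commute with $\lambda\id$ (this fails for a general quaternionic $P$ because $\HH$ is noncommutative), so that the subtraction of $\lambda\id$ survives the conjugation. Everything else is immediate from the already established properties of $\sdet$.
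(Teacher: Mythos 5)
Your proof is correct and is exactly the argument the paper intends: the paper states this proposition without proof, treating it as immediate from Corollary \ref{PropSdet2}(2), and your two steps---that reality of $P$ gives $PAP^{-1}-\lambda\id = P(A-\lambda\id)P^{-1}$, and that similarity invariance of $\sdet$ then yields the equality for every $\lambda$---supply precisely the omitted details, including the key observation that only the commutation with $\lambda\id$ (not the $\sdet$ step) uses that $P$ is real. Nothing to correct.
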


\begin{remark}\label{PERMUTA} {\rm  Let $A$ be a matrix of order $n\geq 2$, let  $P_{\alpha\beta}$ be the real matrix obtained by interchanging the rows $\alpha$ and $\beta$ in the identity matrix $I_n$.  Left (resp. right) multiplication by the matrix $P_{\alpha\beta}$   switches two rows (resp. columns) of $A$.  Now let $i\neq j$ be two indices and let $\pi$ be any permutation of  $\{1,\dots,n\}$ sending $i$ to $1$ and $j$ to $n$. Then $\pi$ can be written as a composition of transpositions, so by taking the product $P$ of the corresponding matrices $P_{\alpha \beta}$ we obtain the matrix $PAP^{-1}$ where the initial entry $a_{ij}$ of $A$ has moved to the place $(1,n)$.
}\end{remark}

\section{SPECTRUM OF MATRICES OF ORDER $2$}\label{ORDERTWO} In the next paragraphs we shall classify the different possible spectra of $2\times 2$ quaternionic matrices depending on the  rank of the differential $\mu_{*\lambda}$ of a characteristic map.

\subsection{Preliminaries}
The characteristic map $\mu\colon \quaternions \to \quaternions$ given in (\ref{CARMAP}) can be extended to a continuous (or even differentiable) map $\mu \colon S^4 \to S^4$ on the sphere $S^4=\quaternions\cup \{\infty\}$, because $\lim \vert \mu(\lambda) \vert = \infty$ when $\vert \lambda \vert\to\infty$.
A rigorous proof of the following result can be found  in  Eilenberg-Steenrod's book \cite[pp.~304--310]{EILSTEEN}:
\begin{proposition}\label{POWER} A polynomial map like $\mu$ and the power map $\lambda^2$ are homotopic, hence they have the same topological degree, which equals $2$.
\end{proposition}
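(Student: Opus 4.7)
The plan is to construct an explicit homotopy of $\mu$ to the squaring map $\lambda\mapsto\lambda^2$ through self-maps of $S^4$, and then to compute the degree of the squaring map via Theorem~\ref{REGULAR}. Expanding the product in (\ref{CARMAP}),
$$\mu(\lambda)=-\lambda b^{-1}\lambda+\lambda b^{-1}a+d b^{-1}\lambda+(c-d b^{-1}a),$$
so the quadratic piece $-\lambda b^{-1}\lambda$ is the dominant term for large $|\lambda|$, which is the key feature I would exploit.

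My homotopy would proceed in two stages. First, linearly scale the lower-order terms to zero via
$$H_1(t,\lambda)=-\lambda b^{-1}\lambda+(1-t)\bigl(\lambda b^{-1}a+d b^{-1}\lambda+c-d b^{-1}a\bigr),$$
connecting $\mu$ (at $t=0$) to $\lambda\mapsto-\lambda b^{-1}\lambda$ (at $t=1$). Second, choose a continuous path $\gamma\colon[0,1]\to\HH\setminus\{0\}$ from $-b^{-1}$ to $1$ (possible because $\HH\setminus\{0\}\cong\R^4\setminus\{0\}$ is path-connected) and set
$$H_2(t,\lambda)=\lambda\,\gamma(t)\,\lambda,$$
connecting $-\lambda b^{-1}\lambda$ to $\lambda^2$. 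The technical point to check is that each $H_i$ extends continuously to a map $S^4\times[0,1]\to S^4$ sending $(\infty,t)$ to $\infty$. For $H_1$ the quadratic term has norm $|b^{-1}|\,|\lambda|^2$ while the remaining pieces grow at most linearly in $|\lambda|$ with coefficients bounded in $t$, so $|H_1(t,\lambda)|\to\infty$ uniformly in $t$. For $H_2$ one has $|H_2(t,\lambda)|=|\gamma(t)|\,|\lambda|^2$, and $|\gamma|$ attains a positive minimum on $[0,1]$, giving the same uniform blow-up. Concatenating yields $\mu\simeq\lambda^2$ as self-maps of $S^4$.

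It then remains to compute $\deg(\lambda\mapsto\lambda^2)=2$. I would apply Theorem~\ref{REGULAR} at the regular value $m=1$. Writing $\lambda=a+v$ with $v$ a pure quaternion, $\lambda^2=(a^2-|v|^2)+2av$, so $\lambda^2=1$ forces $a^2-|v|^2=1$ and $2av=0$, i.e.\ $\lambda=\pm 1$. By Lemma~\ref{ReglasDerivacion}(1) the differential at $\lambda_0$ of the squaring map is $X\mapsto\lambda_0 X+X\lambda_0$, which at $\lambda_0=\pm 1$ is multiplication by $\pm 2$ on $\HH\cong\R^4$, with Jacobian $(\pm 2)^4=16>0$. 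Both fibre points contribute $+1$, so the degree is $2$. The only genuinely delicate step is verifying that the homotopies extend continuously at $\infty$, which as indicated above reduces to a straightforward leading-term dominance estimate.
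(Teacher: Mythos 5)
Your proof is correct, and it is genuinely more self-contained than what the paper does: the paper offers no argument of its own for this proposition, simply referring the reader to Eilenberg--Steenrod (pp.~304--310) for a rigorous treatment of such polynomial maps. Your two-stage homotopy --- first scaling away the lower-order terms of $\mu(\lambda)=-\lambda b^{-1}\lambda+\lambda b^{-1}a+db^{-1}\lambda+(c-db^{-1}a)$ by dominance of the quadratic term, then deforming the middle coefficient $-b^{-1}$ to $1$ along a path in $\HH\setminus\{0\}$ --- writes out explicitly, in the noncommutative setting, the leading-term argument that the citation encodes; note that keeping the deforming coefficient \emph{in the middle}, as $\lambda\,\gamma(t)\,\lambda$, is exactly the right move, since a quaternionic quadratic term cannot in general be normalized by a scalar factor in front, and your uniform-in-$t$ estimates at $\infty$ are the essential checks that make the homotopies live on $S^4$. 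Two small points deserve attention. First, Theorem \ref{REGULAR} as stated in the paper \emph{presupposes} the degree is $k$ and concludes that the fiber over a suitable regular value has $k$ points; you use it in the reverse direction. This is still sound --- the squaring map has some degree $k$, its fiber over the regular value $1$ is $\{\pm 1\}$ with orientation-preserving differentials, hence $k=2$ --- but you should either say this explicitly or invoke the definition of degree as a signed count over a regular value (which is how Madsen--Tornehave define it). Second, applying that smooth theorem on the closed manifold $S^4$ requires the squaring map to be differentiable at $\infty$ as well; this holds because in the chart $w=\lambda^{-1}$ the map is again $w\mapsto w^2$, but the verification deserves a line.
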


From Lemma \ref{ReglasDerivacion} it follows that the differential of   $\mu$ is given by
\begin{equation}\label{DIFER}
\mu_{*\lambda}(X)=Xb^{-1}(a-\lambda)+(d-\lambda)b^{-1}X.
\end{equation}
\subsection{Classification of left spectra}\label{CLASS}
Now we are in a position to reformulate the following result from Huang and So \cite{HUANGSO2001}, see also \cite{MP1,ZHANG2011}.
Let
$A=\begin{bmatrix}
a&b\cr
c&d\cr
\end{bmatrix}
$
be a quaternionic matrix with $b\neq 0$, and denote
$$a_0=-b^{-1}c, \quad a_1=b^{-1}(a-d), \quad \Delta=a_1^2-4a_0.$$

\begin{theorem}[\cite{HUANGSO2001}] The matrix $A$ has one, two or infinite left eigenvalues. The latter case is equivalent to the following conditions: $a_0,a_1$  are real numbers such that
$a_0\neq 0$ and $\Delta<0$.
\end{theorem}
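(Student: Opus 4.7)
The plan is to reduce the problem to a standard monic quadratic and combine the topological degree count with an algebraic analysis of pairs of distinct solutions. First, I would make the $\R$-linear change of variables $\eta = b^{-1}(a - \lambda)$, a diffeomorphism of $\HH$. Substituting $a - \lambda = b\eta$ into $\mu(\lambda) = c - (d-\lambda)b^{-1}(a-\lambda)$ and multiplying by $-b^{-1}$ on the left gives $-b^{-1}\mu(\lambda) = \eta^2 - a_1\eta + a_0 =: \nu(\eta)$, so $\sigma_l(A)$ corresponds bijectively to $\nu^{-1}(0)$, and the classification reduces to counting zeros of $\nu$.

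The topological input handles existence and the generic case. The extension $\bar\nu\colon S^4 \to S^4$ with $\bar\nu(\infty) = \infty$ has degree $2$ by Proposition \ref{POWER}, so $\bar\nu$ is surjective and $\nu^{-1}(0) \neq \emptyset$. By Lemma \ref{ReglasDerivacion}, the differential $\nu_{*\eta}(X) = X\eta + (\eta - a_1)X$ is a Sylvester operator with $P = \eta - a_1$ and $Q = \eta$; its Jacobian is non-negative by Equation (\ref{DET}), so orientation is always preserved wherever the rank is $4$. Hence when $0$ is a regular value of $\nu$, Theorem \ref{REGULAR} yields exactly $|\nu^{-1}(0)| = 2$, giving the two-eigenvalue case.

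The remaining task is to establish the dichotomy: $\nu^{-1}(0)$ has either at most two elements or infinitely many, with the latter case characterized by $a_0, a_1 \in \R$ and $\Delta < 0$ (the condition $a_0 \neq 0$ is automatic since $\Delta < 0$ forces $a_0 > a_1^2/4 \geq 0$). Sufficiency is easy: if $a_0, a_1 \in \R$, then $a_1$ commutes with $\eta$, so $(\eta - a_1/2)^2 = \Delta/4$, and when $\Delta < 0$ the quaternionic square roots of this negative real form a $2$-sphere of pure-imaginary quaternions of norm $\sqrt{-\Delta}/2$, giving infinitely many zeros. For necessity, I would subtract the zero-identities for two distinct roots $\eta_1 \neq \eta_2$ to get $(\eta_1 - a_1)(\eta_1 - \eta_2) = -(\eta_1 - \eta_2)\eta_2$, which shows that $\eta_1 - a_1$ is quaternionically similar to $-\eta_2$ (same real part, same norm). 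Given three distinct zeros, this pairwise similarity forces all of them to share the common real part $\Re(a_1)/2$ and a common norm, so their imaginary parts $\vec v_i$ lie on a $2$-sphere. Substituting $\eta = s + \vec v$ into $\nu(\eta) = 0$ and using $s = \Re(a_1)/2$ produces a linear constraint in $\vec v$ of the form $\vec a_1 \times \vec v = \vec a_0 - s\vec a_1$ for the imaginary part; three distinct $\vec v$ on a $2$-sphere cannot all satisfy it unless $\vec a_1 = 0$, which then forces $\vec a_0 = 0$, so $a_0, a_1 \in \R$, and the real part of the equation gives $|\vec v|^2 = -\Delta/4 > 0$.

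The main obstacle is the necessity direction of the dichotomy. The topological count (Theorem \ref{REGULAR}) only controls the fiber at regular values; at the critical value $0$ it is not immediate that the fiber cannot be finite with $3, 4, \dots$ isolated points (indeed, since the Jacobian is $\geq 0$, orientations cannot cancel, so a purely degree-theoretic argument caps the regular contribution at two but says nothing about critical contributions). The crucial extra ingredient is the pairwise similarity $\eta_1 - a_1 \sim -\eta_2$ extracted algebraically from the polynomial structure of $\nu$; combined with the sphere geometry of imaginary parts this forces the coefficients to be real whenever three or more roots exist, and then sufficiency produces an entire $2$-sphere of zeros, ruling out any finite count strictly between $2$ and infinity.
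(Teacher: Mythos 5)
Your proposal is correct, and its decisive step takes a genuinely different route from the paper's. You share the paper's topological backbone: a quadratic characteristic map of degree $2$ (Proposition \ref{POWER}), whose differential is a Sylvester operator with non-negative Jacobian by formula (\ref{DET}), so that nonzero degree gives existence and Theorem \ref{REGULAR} gives exactly two eigenvalues when $0$ is a regular value; moreover your substitution $\eta=b^{-1}(a-\lambda)$ producing $\nu(\eta)=\eta^2-a_1\eta+a_0$ is the same normalization the paper performs via the companion matrix inside Proposition \ref{DOS}. The divergence is in the degenerate cases. The paper classifies each eigenvalue by the rank ($0$, $2$ or $4$) of the differential, invoking Theorem \ref{RANGO} to detect when the rank drops, and settles the low-rank cases through Propositions \ref{CERO} and \ref{DOS} and Lemma \ref{SOLO}, citing Huang--So's results for uniqueness in the rank-$0$ case. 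You bypass the rank classification entirely: subtracting the equations of two distinct roots gives $(\eta_1-a_1)(\eta_1-\eta_2)=-(\eta_1-\eta_2)\eta_2$, hence the similarity $\eta_1-a_1\sim-\eta_2$ (the same factorization trick that drives the paper's Lemma \ref{SOLO}, but used globally rather than inside a uniqueness lemma); three distinct roots then acquire a common real part $\Re(a_1)/2$ and a common norm, so their imaginary parts are three distinct points of a $2$-sphere subject to the linear constraint $\vec a_1\times\vec v=\vec a_0-\tfrac{1}{2}\Re(a_1)\,\vec a_1$, whose solution set for $\vec a_1\neq 0$ is a line (or empty) and hence meets the sphere in at most two points; therefore $\vec a_1=0$, then $\vec a_0=0$, and the real part of the equation gives $\vert\vec v\vert^2=-\Delta/4>0$. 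This yields directly the dichotomy ``at most two roots or a whole sphere of roots,'' which together with your sufficiency computation is exactly the Huang--So trichotomy. As for what each approach buys: the paper's rank analysis produces local information at each eigenvalue (which types of eigenvalues coexist), which is precisely what it reuses in its $3\times 3$ linearization program; your argument is more self-contained for the $2\times 2$ statement itself, avoiding the full strength of Theorem \ref{RANGO}, Lemma \ref{SOLO} and the external quadratic-formula references, and you correctly observe that the hypothesis $a_0\neq 0$ in the statement is automatic once $a_1\in\R$ and $\Delta<0$.
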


\begin{remark}\label{ESFERA}{\rm We shall call {\em spherical} the infinite case, because  the  spectrum
$
\sigma_l(A)=\{(1/2)(a+d+bq)\colon q^2=\Delta\}
$
is diffeomorphic to the sphere $S^2\subset \quaternions_0=\langle \I ,\J ,\K\rangle$.}
\end{remark}

Let $\lambda\in\quaternions$ be an eigenvalue of $A$, that is
 $\mu(\lambda)=0$  for the map $\mu$ in
(\ref{CARMAP}). In the next Propositions we shall apply  Theorem \ref{RANGO} to the differential $\Sigma=\mu_{*\lambda}$ computed in (\ref{DIFER}). Accordingly to the notation of Sylvester equation in Section \ref{SYLVESTER}, we denote
$$P=(d-\lambda)b^{-1}, \quad Q=b^{-1}(a-\lambda).$$
First we study the two non-generic cases.
\begin{proposition}\label{CERO} If $\rank \mu_{*\lambda}=0$, then $a_0,a_1$ are real numbers and $\Delta=0$. Moreover $\lambda$ equals $(a+d)/2$ and this is the only left eigenvalue of the matrix.
\end{proposition}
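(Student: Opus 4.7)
The plan is to apply Theorem~\ref{RANGO}(3) to the Sylvester-type operator $\mu_{*\lambda}(X) = PX + XQ$ coming from formula~(\ref{DIFER}), with $P = (d-\lambda)b^{-1}$ and $Q = b^{-1}(a-\lambda)$. The rank-zero hypothesis then forces $P$ to be a real number $p$ and $Q = -P = -p$. Unwinding these two identities gives $d - \lambda = pb$ and $a - \lambda = -pb$; adding them yields $\lambda = (a+d)/2$ at once, while subtracting them records the useful identity $pb = (d-a)/2$.

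I would then read off the scalars $a_1$ and $a_0$. Since $p \in \R$ commutes with $b$, from $a - d = -2pb$ one gets $a_1 = b^{-1}(a-d) = -2p$, which is real. Since $\lambda$ is an eigenvalue, $\mu(\lambda) = 0$, so $c = (d-\lambda)b^{-1}(a-\lambda) = p(-pb) = -p^{2}b$; hence $a_0 = -b^{-1}c = p^{2}$ is also real, and $\Delta = 4p^{2} - 4p^{2} = 0$, as claimed.

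For uniqueness, I would introduce the change of variable $y = b^{-1}(\lambda - d)$, so that $d - \lambda = -by$ and $b^{-1}(a-\lambda) = a_1 - y$. Multiplying $\mu(\lambda) = 0$ on the left by $-b^{-1}$ turns the characteristic equation into the noncommutative quadratic $y^{2} - y a_1 + a_0 = 0$. Because $a_0$ and $a_1$ have just been shown to be real, they commute with $y$; combined with $\Delta = 0$ this collapses to $(y - a_1/2)^{2} = 0$. Since $\HH$ has no nonzero nilpotents (the quaternionic norm is multiplicative, so $z^{2} = 0$ forces $|z|^{2} = 0$), necessarily $y = a_1/2 = -p$, and back-substitution gives $\lambda = d - pb = (a+d)/2$ as the sole left eigenvalue.

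The main obstacle is the uniqueness step: one is tempted simply to invoke the Huang--So classification, but to keep the argument self-contained the right move is to exhibit the transformed characteristic equation in the form $(y - a_1/2)^{2} = 0$ and then exploit norm-multiplicativity of $\HH$ to rule out any other root. Everything else is an algebraic unwinding of Theorem~\ref{RANGO}(3) together with the vanishing of $\mu$ at $\lambda$.
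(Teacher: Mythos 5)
Your proof is correct. Its first half is essentially the paper's own argument: both apply Theorem~\ref{RANGO}(3) to $P=(d-\lambda)b^{-1}$, $Q=b^{-1}(a-\lambda)$ and unwind to obtain $\lambda=(a+d)/2$, $a_1=-2p\in\R$, $a_0=p^2\in\R$ and $\Delta=0$ (the paper writes $t$ where you write $p$). You genuinely diverge at the uniqueness claim. There the paper is content to cite Theorem~2.3 of Huang--So \cite{HUANGSO2001}, while you give a self-contained argument: the affine bijection $y=b^{-1}(\lambda-d)$ of $\HH$ converts $\mu(\lambda)=0$ into $y^{2}-ya_1+a_0=0$, and since $a_0,a_1$ are real (hence central) and $\Delta=0$, this collapses to $(y-a_1/2)^{2}=0$; multiplicativity of the quaternionic norm excludes nonzero nilpotents, so $y=a_1/2$ is the only root, i.e.\ $\lambda=(a+d)/2$ is the only left eigenvalue (the roots of $\mu$ being exactly the left eigenvalues since $\vert\mu(\lambda)\vert=\sdet(A-\lambda\id)$). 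Your route buys independence from the external classification of quaternionic quadratic equations, at the cost of a few extra lines; it is also very much in the spirit of the paper itself, which performs an analogous reduction to a quadratic with normalized coefficients via the companion matrix in the proof of Proposition~\ref{DOS}. The paper's route is shorter but defers the substantive uniqueness fact to the literature.
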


\begin{proof}
We know from Theorem \ref{RANGO} that $P=t\in\reals$ and $Q=-t$, then $a_1=-2t$ and $2\lambda=a+d$. From $\mu(\lambda)=0$ it follows that $a_0=+t^2$, then $\Delta=0$. Now it is easy to check (using for instance Theorem 2.3 in \cite{HUANGSO2001}) that $\lambda=a+tb$ is the only left eigenvalue of $A$.
\end{proof}

\begin{lemma}\label{SOLO}  Let $A,B$ be two similar quaternions that do not commute. Then the equation
$\lambda^2-(A+B)\lambda+AB=0$ has the unique solution $\lambda=B$. \end{lemma}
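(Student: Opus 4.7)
The plan is first to verify existence by direct substitution: plugging $\lambda=B$ into $\lambda^2-(A+B)\lambda+AB$ yields $B^2-AB-B^2+AB=0$. For uniqueness, since $A$ and $B$ are similar, Theorem~\ref{RANGO}(2) tells us they share the same real part $t$ and the same norm, so I would write $A=t+a$, $B=t+b$ with $a,b$ purely imaginary quaternions of equal norm. The non-commutation assumption is equivalent to $a\times b\neq 0$ in the $\reals^3$ of imaginary quaternions, which in particular precludes $a+b=0$. Substituting $\lambda=t+\mu$ and using the centrality of $t$, the equation reduces to
$$\mu^2-(a+b)\mu+ab=0,$$
and it suffices to prove this has $\mu=b$ as its only solution.

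The key step is a noncommutative factorization. Subtracting the identity at $\mu=b$ from the equation gives $\mu^2-b^2=(a+b)(\mu-b)$. The naive factorization $\mu^2-b^2=(\mu-b)(\mu+b)$ is not valid in $\HH$, but the asymmetric identity $\mu^2-b^2=(\mu-b)\mu+b(\mu-b)$ is; combining both and cancelling $b(\mu-b)$ on each side yields
$$(\mu-b)\mu=a(\mu-b).$$
Assuming $\mu\neq b$, the factor $\mu-b$ is invertible and I conclude $\mu=(\mu-b)^{-1}a(\mu-b)$, so $\mu$ is similar to $a$, hence purely imaginary with $|\mu|=|a|$.

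To finish I would extract the real and imaginary parts of the original equation under the assumption that $\mu$ is purely imaginary with $|\mu|=|a|=|b|$. Using $\mu^2=-|a|^2$ together with the standard formulas $\Re(xy)=-\langle x,y\rangle$ and $xy-\Re(xy)=x\times y$ for purely imaginary $x,y$, the imaginary part reads $(a+b)\times\mu=a\times b$; rewriting $a\times b=(a+b)\times b$ this is equivalent to $(a+b)\times(\mu-b)=0$, so $\mu-b$ is a real multiple of $a+b$ (here $a+b\neq 0$ is essential). The real part, after using $|\mu|=|b|$, simplifies to $\langle a+b,\mu-b\rangle=0$, forcing $\mu-b$ to be orthogonal to $a+b$. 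Together, $\mu-b=0$, contradicting $\mu\neq b$.

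The main obstacle I anticipate is locating the right noncommutative factorization: naive quadratic factoring fails in $\HH$, and the trick is to peel off the known root $b$ on a specific side using the asymmetric identity $\mu^2-b^2=(\mu-b)\mu+b(\mu-b)$. Once this is spotted and $\mu$ is recognized as similar to $a$, the remainder reduces to a transparent two-condition calculation in the geometry of $\reals^3$.
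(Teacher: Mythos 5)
Your proof is correct, and its central step coincides with the paper's: both peel off the known root via the one-sided factorization $(\mu-b)\mu=a(\mu-b)$ (in the paper, $(\lambda-B)\lambda=A(\lambda-B)$, obtained directly from the unreduced equation), concluding that any putative second root is similar to $A$ and hence shares the common real part and norm. Where you diverge is the endgame. The paper, after normalizing $A,B,\lambda$ to pure imaginary unit quaternions, exploits $\lambda^2=-1=B^2$ to replace the quadratic term: subtracting the identity $B^2-(A+B)B+AB=0$ leaves the linear equation $(A+B)\lambda=(A+B)B$, and cancelling the nonzero quaternion $A+B$ gives $\lambda=B$ at once. You instead split the reduced equation into its real and imaginary parts, showing that $\mu-b$ is simultaneously parallel to $a+b$ (cross-product condition) and orthogonal to $a+b$ (scalar condition), hence zero. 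Both finishes hinge on the same hypothesis, $a+b\neq 0$; you make explicit that this follows from non-commutation, a point the paper uses silently when it cancels $A+B$. The paper's finish is algebraically shorter (one cancellation), while yours exposes the underlying geometry of $\R^3$; your choice to pass to pure imaginary variables \emph{before} the factorization, rather than after establishing similarity as the paper does, is a harmless reordering.
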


\begin{proof}
If $\lambda\neq B$ is a solution,  it follows from  $(\lambda - B)\lambda=A(\lambda-B)$  that $\lambda$ and $A$ are similar, then $\Re(\lambda)=\Re(A)=\Re(B)$ and $\vert \lambda \vert= \vert A\vert= \vert B \vert$.
By substituting in the equation we see that the  real parts and norms dissapear, so we can suppose that $A,B,\lambda$ are pure imaginary quaternions with norm $1$. Hence $\lambda^2=-1=B^2$ so the equation reduces to $(A+B)\lambda=(A+B)B$, which implies $\lambda=B$, a contradiction.

Alternatively, the uniqueness of $\lambda$ can be proved by using Theorem 2.3, case 4.1, of the solution of quadratic equations in \cite{HUANGSO2002}.
\end{proof}

\begin{proposition}\label{DOS}
If $\rank \mu_{*\lambda}=2$   two things may happen:
\begin{enumerate}
\item either the spectrum is spherical and all the eigenvalues have rank $2$;
\item
or the matrix has just one eigenvalue.\end{enumerate}
\end{proposition}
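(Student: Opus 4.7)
The plan is to turn the characteristic equation into a monic quaternionic quadratic, recast the rank-$2$ condition as a similarity between two quaternions, and then play the two off each other to force either a unique eigenvalue or the spherical regime of the Huang--So theorem.

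I would first use the substitution $x=b^{-1}(\lambda-a)$. Since $a-\lambda=-bx$ and $d-\lambda=-b(a_1+x)$, formula (\ref{CARMAP}) rewrites as $\mu_A(\lambda)=-b(x^2+a_1x+a_0)$, so that the eigenvalues correspond exactly to the roots of $x^2+a_1x+a_0=0$. The differential (\ref{DIFER}) becomes a Sylvester operator determined, up to conjugation by $b$, by the quaternions $-(a_1+x)$ and $x$. Theorem \ref{RANGO} then translates $\rank\mu_{*\lambda}=2$ into the single condition that $-(a_1+x)$ and $x$ are similar non-real quaternions, i.e.\ $\Re(x)=-\Re(a_1)/2$ and $|a_1+x|=|x|$ with $x\notin\R$.

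Now assume that $x_1$ is an eigenvalue with $\rank\mu_{*\lambda_1}=2$ and that some other eigenvalue $x_2\neq x_1$ exists. Subtracting the two instances of the quadratic and using the identity $x_1^2-x_2^2=x_1(x_1-x_2)+(x_1-x_2)x_2$, I would obtain $(x_1+a_1)(x_1-x_2)=-(x_1-x_2)x_2$, which I read as the similarity $x_1+a_1\sim -x_2$. Combined with the rank-$2$ similarity $a_1+x_1\sim -x_1$, this gives $x_1\sim x_2$. Writing $x_i=s+v_i$ with common real part $s$ and $|v_1|=|v_2|$, a short computation collapses the subtracted equation to $(a_1+2s)(x_1-x_2)=0$, whence $a_1=-2s\in\R$; substituting back produces $a_0=|x_1|^2\in\R$ and $\Delta=-4|v_1|^2<0$. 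By the Huang--So theorem the spectrum is then spherical, and since $a_1$ is now real a direct verification shows that every spherical eigenvalue $x=-a_1/2+v$ satisfies $|a_1+x|^2=(a_1/2)^2+|v|^2=|x|^2$ and is non-real, so has rank exactly $2$ (rank $0$ being excluded by Proposition \ref{CERO}).

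In the opposite case no second eigenvalue exists and $\lambda_1$ is the only root of $\mu_A$, giving case (2). The most delicate step is the passage from the noncommutative identity $(x_1+a_1)(x_1-x_2)=-(x_1-x_2)x_2$ to the similarity $x_1+a_1\sim -x_2$: it relies on the fact that $\HH$ is a division ring, so that $x_1-x_2$ is invertible and the identity can be read as a conjugation. Once this similarity is combined with the one supplied by rank $2$, the rest is a routine algebraic computation.
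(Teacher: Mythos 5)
Your proof is correct, but it is organized around a different dichotomy than the paper's. The common ground: your substitution $x=b^{-1}(\lambda-a)$ is exactly the paper's reduction to the companion matrix, and both arguments use Theorem \ref{RANGO} to read $\rank\mu_{*\lambda}=2$ as a similarity between $a_1+x_1$ and $-x_1$ with non-real terms. The divergence is in the case split. The paper writes $P=t+\alpha$, $Q=-t+\beta$ and splits on $\beta=\alpha$ versus $\beta\neq\alpha$, i.e.\ on data of the differential at the given eigenvalue; the branch $\beta\neq\alpha$ then requires Lemma \ref{SOLO}, a uniqueness statement for quadratics $\lambda^2-(A+B)\lambda+AB=0$ with $A,B$ similar and non-commuting, to conclude that the eigenvalue is unique. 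You instead split on whether a second eigenvalue $x_2\neq x_1$ exists, and show that its mere existence forces the spherical conditions: subtracting the two quadratics and conjugating by the invertible $x_1-x_2$ gives $a_1+x_1\sim -x_2$, hence $x_1\sim x_2$, and then $v_1^2=v_2^2$ collapses the difference to $(a_1+2s)(x_1-x_2)=0$, so $a_1=-2s\in\R$, $a_0=\vert x_1\vert^2>0$ and $\Delta=-4\vert v_1\vert^2<0$. This makes the one-eigenvalue branch tautological and eliminates Lemma \ref{SOLO} altogether. Note that your subtraction-and-conjugation step is essentially the device the paper uses \emph{inside} the proof of Lemma \ref{SOLO} (factoring the difference of two roots to produce a similarity), so the two proofs are close cousins; but you run it in the opposite direction, trading the paper's explicit description of the single-eigenvalue case (where $a_0=(t+\alpha)(t-\beta)$ and the unique eigenvalue is $t-\beta$) for a shorter, more self-contained argument. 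Both proofs lean equally on the Huang--So theorem to identify the spherical case, and your closing verification that every spherical eigenvalue has rank exactly $2$ matches the paper's.
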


\begin{proof}
By using the diffeomorphism $a+b\sigma_l(A^\prime)=\sigma_l(A)$ we can substitute $A$ by  the so-called ``companion matrix"
$A^\prime=\begin{bmatrix}
0&1\cr
-a_0&-a_1\cr
\end{bmatrix}
$.
Since the rank is $2$, we have from Theorem \ref{RANGO} that
$P=t+\alpha$ and $Q=-t+\beta$ with $\alpha,\beta\in \quaternions_0=\langle \I ,\J ,\K\rangle$, $\vert \alpha \vert =\vert \beta \vert \neq 0$. Then $a_1=-2t+\beta- \alpha$.
The first possibility is that $\beta=\alpha$, then $a_1=-2t$. It follows from $\mu(\lambda)=0$ that $a_0=t^2+\vert \beta \vert^2 \neq 0$ and $\Delta=-4\vert \beta \vert^2 <0$. Then we have the spherical case. In particular $\lambda=(-a_1-2\beta)/2$. The other eigenvalues have the form $(-a_1+q)/2$ with $q^2=-4\vert\beta\vert^2$, then the differential of $\mu$ verifies $P=
t- q^{-1}/2$ and $Q=-t-q/2$, and so they have rank $2$ too.
The second possibility is that $\beta\neq \alpha$. Then
$a_1=-2t+\beta-\alpha$, $a_0=(t+\alpha)(t-\beta)$, and Lemma \ref{SOLO} shows that the only eigenvalue is $\lambda=t-\beta$.
\end{proof}

Now we consider the generic case.

\begin{proposition} If $\rank \mu_{*\lambda}=4$  then the matrix has two different eigenvalues.
\end{proposition}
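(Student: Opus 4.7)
I would apply the global degree formula (Theorem \ref{REGULAR}) to the continuous extension of the characteristic map to a map $\mu\colon S^4\to S^4$, which has topological degree $2$ by Proposition \ref{POWER}. The strategy is to verify that, under the hypothesis $\rank\mu_{*\lambda}=4$, the value $0$ is automatically a regular value of $\mu$ and the differential preserves orientation at every point of $\mu^{-1}(0)$. Theorem \ref{REGULAR} will then deliver exactly $2$ distinct preimages, i.e.\ exactly two different left eigenvalues.

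The main step is to show that \emph{every} eigenvalue $\lambda'$ of $A$ also has a rank-$4$ differential, ruling out any hidden singular preimages. I would handle this by contraposition using Propositions \ref{CERO} and \ref{DOS}. If some eigenvalue $\lambda'$ had $\rank\mu_{*\lambda'}=0$, then Proposition \ref{CERO} would force $\lambda'$ to be the unique eigenvalue of $A$; in particular $\lambda'=\lambda$, contradicting $\rank\mu_{*\lambda}=4$. Similarly, if $\rank\mu_{*\lambda'}=2$, Proposition \ref{DOS} leaves only two alternatives: either $\lambda'$ is the unique eigenvalue (again forcing $\lambda'=\lambda$ and contradicting rank $4$), or the spectrum is spherical, in which case the same proposition asserts that \emph{every} eigenvalue of $A$ has rank $2$, once more contradicting the hypothesis at $\lambda$. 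Hence every eigenvalue of $A$ has rank-$4$ differential and $0$ is a regular value of $\mu$.

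For the orientation condition, I would invoke the explicit Sylvester determinant formula (\ref{DET}): the real Jacobian of $\mu_{*\lambda'}$ is a sum of squares, hence non-negative, and strictly positive precisely when the associated Sylvester operator has rank $4$. So at every preimage of $0$ the differential is orientation-preserving, and Theorem \ref{REGULAR} gives $|\mu^{-1}(0)|=2$, i.e.\ $A$ has exactly two distinct left eigenvalues. The step I expect to be the main obstacle is the bookkeeping in the contraposition: one must read Propositions \ref{CERO} and \ref{DOS} as local statements whose hypothesis is a rank condition at a single eigenvalue, so that they may be applied to any putative singular eigenvalue $\lambda'\neq\lambda$ independently of the global structure of $\sigma_l(A)$.
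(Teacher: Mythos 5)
Your proposal is correct and follows essentially the same route as the paper: the paper likewise rules out singular preimages by observing that an eigenvalue of rank $0$ or $2$ would place $A$ under Proposition \ref{CERO} or \ref{DOS} (forcing either a unique eigenvalue or an all-rank-$2$ spherical spectrum, both incompatible with $\rank\mu_{*\lambda}=4$), and then applies Theorem \ref{REGULAR} with degree $2$ from Proposition \ref{POWER}, using formula (\ref{DET}) for the orientation condition. Your write-up merely makes explicit the contraposition bookkeeping that the paper compresses into one sentence.
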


\begin{proof}
Since the differential has maximal rank at the eigenvalue $\lambda$, the matrix $A$ cannot correspond to Propositions \ref{CERO} or \ref{DOS}, hence all its eigenvalues are of rank $4$.  Then by the inverse function theorem the fiber $\mu^{-1}(0)$ is discrete (in fact compact) and its cardinal equals (Theorem \ref{REGULAR}) the degree of the map $\mu$, which is $2$ by Proposition \ref{POWER}. Notice that  the Jacobian is nonnegative by formula (\ref{DET}).
\end{proof}

\begin{remark} {\rm In \cite{JANOVSKAOPFER2010}, Janovsk\'a and Opfer show that for quaternionic polynomials there are several types of zeros
accordingly to the rank of some real $4\times 4$ matrix, but their procedure does not seem to have an immediate geometrical meaning.}
\end{remark}
\section{CHARACTERISTIC MAPS OF $3\times 3$ MATRICES}\label{CHARMAP3x3}
The only known results about the left spectrum of $3\times3$ matrices are due to So \cite{SO2005}, who did a case by case study, depending on some relationships among the entries of the matrix. He showed that when $n=3$ left eigenvalues  could
be found by solving quaternionic polynomials of degree not greater than $3$. In general there is not any known method for solving the resulting equations.

In the following paragraphs we shall develop a method for matrices of order $3$ which is analogous to that of  Section \ref{ORDERTWO}, that is we shall find a map $\mu_A$ such that $\vert \mu_A(\lambda) \vert  = \sdet(A-\lambda\id)$. This time, however, the characteristic map $\mu_A$ will be in most cases a rational function instead of a polynomial (the latter occurs when the matrix $A$ has some null entry outside the diagonal).

Let us consider the quaternionic matrix
$A=\begin{bmatrix}
a & b & c \cr
f & g & h \cr
p & q & r \cr
\end{bmatrix}\in \M_{3\times 3}(\quaternions)$.

\subsection{Polynomial case}\label{SSC0}
We start  studying the simplest situation, when there exists some zero entry outside the diagonal.  

First, suppose that the matrix has the zero entry $c=0$, that is
$$\sdet(A-\lambda\id)=
\sdet\begin{bmatrix}
a-\lambda & b & 0 \cr
f & g-\lambda & h \cr
p & q & r-\lambda\cr
\end{bmatrix}.$$
There are three possibilities:
\begin{enumerate}
\item if $b,h=0$, we have a triangular matrix, so we can take
\begin{equation}\label{BH0}
\mu(\lambda)=(r-\lambda)(g-\lambda)(a-\lambda);
\end{equation}
\item if $b= 0$ but $h\neq 0$, then Proposition \ref{sdet por cajas} allows us to reduce to the  $2\times 2$ case  and we obtain
\begin{equation}\label{B0}
\mu(\lambda)=\left(q-(r-\lambda)h^{-1}(g-\lambda)\right)(a-\lambda);
\end{equation}
\item finally, if $b\neq 0$, we can proceed as follows. We create a zero in the first row by substracting to the first row  $C_1$ the multiple $C_2b^{-1}(a-\lambda)$ of the second column:
$$\sdet(A-\lambda\id)=\sdet\begin{bmatrix}
0 & b & 0 \cr
f-(g-\lambda)b^{-1}(a-\lambda) & g-\lambda & h \cr
p-qb^{-1}(a-\lambda) & q & r-\lambda\cr
\end{bmatrix},$$
then we permute the two last columns in order to reduce to the $2\times 2$ case. In this way we can take as a characteristic map the polynomial of degree $3$
\begin{equation}\label{POLINOMIO}
\mu(\lambda)=p-qb^{-1}(a-\lambda)-(r-\lambda)h^{-1}\left(f-(g-\lambda)b^{-1}(a-\lambda)\right).
\end{equation}
\end{enumerate}

\begin{theorem}\label{polinomio} If the matrix $A\in\M_{3\times 3}(\HH)$ has some zero entry outside the diagonal, then $A$ admits a polynomial characteristic map.
\end{theorem}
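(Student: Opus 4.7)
The plan is to reduce the general case to the specific case $c = 0$ that has just been worked out in detail. The preceding discussion (formulas (\ref{BH0}), (\ref{B0}) and (\ref{POLINOMIO})) has already established the theorem under the additional hypothesis that the zero entry is precisely in position $(1,3)$. The task is therefore to show that any zero entry outside the diagonal can be moved to position $(1,3)$ by a transformation that does not affect the existence of a polynomial characteristic map.

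First I would invoke Remark \ref{PERMUTA}: if $a_{ij} = 0$ for some off-diagonal indices $i \neq j$, then there is a permutation $\pi$ of $\{1,2,3\}$ sending $i \mapsto 1$ and $j \mapsto 3$, and the corresponding permutation matrix $P$ is real and invertible, so the conjugate $A' = PAP^{-1}$ has a zero in the $(1,3)$ position.

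Next I would apply the Proposition preceding Remark \ref{PERMUTA}, which asserts that for any invertible real matrix $P$ one has $\sdet(A-\lambda\id) = \sdet(PAP^{-1}-\lambda\id)$. Consequently $A$ and $A'$ share characteristic maps: any map $\mu$ satisfying $|\mu(\lambda)| = \sdet(A'-\lambda\id)$ automatically satisfies $|\mu(\lambda)| = \sdet(A-\lambda\id)$, so it is a characteristic map of $A$ as well.

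Finally, since $A'$ has a zero in position $(1,3)$, the case analysis of Subsection \ref{SSC0} applies directly to $A'$: one of formulas (\ref{BH0}), (\ref{B0}) or (\ref{POLINOMIO}) produces a polynomial characteristic map of $A'$, depending on whether the entries in positions $(1,2)$ and $(2,3)$ of $A'$ vanish. By the previous step this polynomial is also a characteristic map of $A$, which proves the theorem. There is no real obstacle here beyond bookkeeping; the only thing to keep in mind is that the conjugation must be by a \emph{real} matrix (which is automatic for permutation matrices), since similarity by a general quaternionic matrix need not preserve the left spectrum.
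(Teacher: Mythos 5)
Your proposal is correct and follows exactly the paper's own argument: conjugate by a real permutation matrix (Remark \ref{PERMUTA}) to move the zero entry to position $(1,3)$, note that real similarity preserves $\sdet(A-\lambda\id)$ and hence the characteristic maps, and then invoke the case analysis of Subsection \ref{SSC0}. Your write-up is simply a more detailed version of the paper's two-line proof, including the correct caveat that the conjugating matrix must be real.
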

\begin{proof} Let the entry be $a_{ij}=0$,  with $i\neq j$. Then accordingly to Remark \ref{PERMUTA} there is a real invertible matrix $P$ such that the transformation $PAP^{-1}$ does not change the characteristic maps and gives a matrix with $a_{13}=0$.
\end{proof}

\subsection{Rational case}\label{CNO0}
In the more general situation, when $c\neq 0$, we can compute the Study's determinant of the matrix $A$ by creating zeroes in the first row. Then
$$
\sdet(A)=\sdet\begin{bmatrix}
0&0&c \cr
f-hc^{-1}a & g-hc^{-1}b&h \cr
p-rc^{-1}a &q-rc^{-1}b&r \cr
\end{bmatrix}.
$$

From Lemma \ref{sdet por cajas} and the results for $2\times 2$ matrices it follows:
\begin{proposition}\label{CASOCno0}
If $c\neq 0$, then $\sdet(A)$ is given:
\begin{enumerate}
\item
when $g-hc^{-1}b \neq 0$, by
$$
\vert c\vert \cdot \vert g-hc^{-1}b\vert  \cdot \vert p-rc^{-1}a-(q-rc^{-1}b)(g-hc^{-1}b)^{-1}(f-hc^{-1}a)\vert ;
$$
\item
otherwise, by
$$
\vert c\vert \cdot \vert q-rc^{-1}b\vert  \cdot \vert f-hc^{-1}a\vert .
$$
\end{enumerate}
\end{proposition}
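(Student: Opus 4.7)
The plan is to simply finish the reduction that already begins in the paragraph preceding the proposition and then invoke the $2\times 2$ case in two separate subcases. The preliminary column operations $C_1\to C_1-C_3 c^{-1}a$ and $C_2\to C_2-C_3 c^{-1}b$ (well-defined because $c\neq 0$) put $A$ into the form shown just above the proposition, and by Corollary \ref{PropSdet2}(3) they preserve $\sdet$. What remains is to extract the factor $|c|$ corresponding to the $(1,3)$ entry and then deal with the surviving $2\times 2$ block
\[
M=\begin{bmatrix} g-hc^{-1}b & f-hc^{-1}a \\ q-rc^{-1}b & p-rc^{-1}a \end{bmatrix}.
\]

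To extract $|c|$ I would swap columns $1$ and $3$ (no change in $\sdet$ by Corollary \ref{PropSdet2}(5)), obtaining an upper block-triangular matrix whose $(1,1)$ block is the scalar $c$ and whose lower-right block is $M$. A further permutation of the two outer rows and of the two outer columns converts this into the lower block-triangular shape of Proposition \ref{sdet por cajas}, giving
\[
\sdet(A)=|c|\cdot \sdet(M).
\]

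For the two cases I would then apply the $2\times 2$ reduction used in equation (\ref{CARMAP}). In case (1), when $g-hc^{-1}b\neq 0$, I run the same trick one dimension lower: perform the column operation $C_2\to C_2-C_1(g-hc^{-1}b)^{-1}(f-hc^{-1}a)$ on $M$, producing a lower-triangular $2\times 2$ matrix with diagonal entries $g-hc^{-1}b$ and $p-rc^{-1}a-(q-rc^{-1}b)(g-hc^{-1}b)^{-1}(f-hc^{-1}a)$. Proposition \ref{sdet por cajas} (applied to $1\times 1$ blocks) then gives the announced formula. In case (2), $M$ has a zero in position $(1,1)$; a single column swap turns $M$ into a diagonal matrix with entries $f-hc^{-1}a$ and $q-rc^{-1}b$, so $\sdet(M)=|f-hc^{-1}a|\cdot|q-rc^{-1}b|$, which combined with the $|c|$ factor yields formula (2).

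I do not expect any real obstacle, since every step is either a column/row manipulation controlled by Corollary \ref{PropSdet2} or a direct invocation of Proposition \ref{sdet por cajas}. The only mildly delicate point is the bookkeeping of the permutations needed to pass from the block upper-triangular shape that the natural column swap produces to the block lower-triangular shape assumed in Proposition \ref{sdet por cajas}; this is purely cosmetic and can be absorbed into two applications of Corollary \ref{PropSdet2}(5).
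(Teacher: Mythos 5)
Your proposal is correct and follows essentially the same route as the paper, whose (very terse) proof is precisely the reduction you describe: clear the first row by right-multiple column operations, peel off the factor $\vert c\vert$ with Proposition \ref{sdet por cajas}, and finish with the $2\times 2$ reduction, split according to whether $g-hc^{-1}b$ vanishes. Two cosmetic remarks: after swapping columns $1$ and $3$ the matrix is already in the lower block-triangular shape $\bigl[\begin{smallmatrix} c & 0 \\ * & M \end{smallmatrix}\bigr]$ required by Proposition \ref{sdet por cajas}, so your extra outer permutations are unnecessary, and in case (2) the swapped block is lower triangular rather than diagonal --- though its $\sdet$ is still the product of the norms of its diagonal entries, so your formula stands.
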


\begin{definition}\label{poloDef}
We shall call \emph{pole} of the matrix   $A\in\mathcal{M}_{3\times 3}(\HH)$ the point $\pi_A=g-hc^{-1}b$.
\end{definition}

Notice that $\pi_A$ is the quasideterminant $\vert A^{3,1}\vert_{21}$ (see page \pageref{QUASIDETERM}).

By applying Proposition~\ref{CASOCno0}  to the matrix $A-\lambda \id $ we obtain the following characteristic map for  $A$ (we omit the term $\vert c \vert$).

\begin{proposition}\label{CHARTRES}
Let  $A$ be a matrix of order  $3\times 3$ such that $c\neq 0$. A characteristic map can be defined as follows:
\begin{enumerate}
\item
if $\pi_A=g-hc^{-1}b$ is the pole of $A$, then
$$\mu(\pi_A)=\left(q-(r-\pi_A)c^{-1}b\right)\left(f-hc^{-1}(a-\pi_A)\right);$$
\item
for $\lambda\neq\pi_A$ we define
\begin{eqnarray}\label{MURACIONAL}
\mu(\lambda)&=&
(\pi_A-\lambda)
\left[p-(r-\lambda)c^{-1}(a-\lambda)\right.-\\
&&\quad\left.\left(q-(r-\lambda)c^{-1}b\right)
	 (\pi_A-\lambda)^{-1}
	\left(f-hc^{-1}(a-\lambda)\right)\right].\nonumber
\end{eqnarray}
\end{enumerate}
\end{proposition}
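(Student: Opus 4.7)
The plan is to apply Proposition \ref{CASOCno0} directly to the shifted matrix $A-\lambda\id$ and match the resulting factors with the stated formulas for $\mu$. The key preliminary observation is that subtracting $\lambda\id$ only modifies the diagonal entries of $A$, replacing $(a,g,r)$ by $(a-\lambda,g-\lambda,r-\lambda)$, while the off-diagonal entries $b,c,f,h,p,q$ are unchanged. In particular, the pole of $A-\lambda\id$ in the sense of Definition \ref{poloDef} equals
$$(g-\lambda)-hc^{-1}b=\pi_A-\lambda.$$

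First I would handle the case $\lambda=\pi_A$. Here the pole of $A-\pi_A\id$ vanishes, so case (2) of Proposition \ref{CASOCno0} yields
$$\sdet(A-\pi_A\id)=|c|\cdot|q-(r-\pi_A)c^{-1}b|\cdot|f-hc^{-1}(a-\pi_A)|.$$
Using multiplicativity of the quaternionic norm, this is precisely $|c|\cdot|\mu(\pi_A)|$ with the $\mu(\pi_A)$ specified in statement (1).

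Next, for $\lambda\neq\pi_A$ the pole $\pi_A-\lambda$ is invertible, so case (1) of Proposition \ref{CASOCno0} applies after performing the substitution $(a,g,r)\mapsto(a-\lambda,g-\lambda,r-\lambda)$ in every entry of the formula. This gives $\sdet(A-\lambda\id)$ as the product of $|c|$, $|\pi_A-\lambda|$, and the norm of the bracketed expression appearing in (\ref{MURACIONAL}). Absorbing the factor $|\pi_A-\lambda|$ inside via $|xy|=|x|\,|y|$, we obtain exactly $|c|\cdot|\mu(\lambda)|$ with $\mu(\lambda)$ as in (\ref{MURACIONAL}).

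In both cases $|\mu(\lambda)|=\sdet(A-\lambda\id)$ up to the overall constant $|c|$, which matches the clause \emph{up to a constant} in the definition of characteristic map. I do not foresee any real obstacle; the argument is essentially bookkeeping. The only subtle check is that the pole of the shifted matrix is $\pi_A-\lambda$, which follows immediately because the entries $h,c,b$ are untouched by the shift, and one should also note that the branching between the two statements of the proposition corresponds precisely to the branching between cases (1) and (2) of Proposition \ref{CASOCno0} evaluated at $A-\lambda\id$.
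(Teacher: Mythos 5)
Your proposal is correct and follows exactly the paper's own (very terse) justification: apply Proposition \ref{CASOCno0} to $A-\lambda\id$, observe that the pole of the shifted matrix is $\pi_A-\lambda$, and absorb the factors $\vert c\vert$ and $\vert\pi_A-\lambda\vert$ using multiplicativity of the norm and the \emph{up to a constant} clause in the definition of characteristic map. The paper compresses all of this into one sentence (``we omit the term $\vert c\vert$''), so your write-up is simply a more detailed account of the same argument.
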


\begin{remark} {\rm The map in (\ref{MURACIONAL})  is exactly the same formula given by So in \cite[p.~563]{SO2005}, even if our method is completely different. This is why we chose to compute determinants starting from the right bottom corner. }
\end{remark}

\subsection{Continuity at the pole}
Up to now we have defined maps which verify
$\vert \mu(\lambda)\vert= \sdet(A-\lambda\id)$ in norm. Since  $\sdet$ is a continuous map  we have
$\lim\limits_{\lambda\rightarrow \pi_A}\vert \mu(\lambda)\vert =\vert \mu(\pi_A)\vert $. However, the following example shows that $\mu$ may not be continuous at the pole $\pi_A$.

\begin{example}\label{PoloNoAutovalor}
{\rm
Let $A=\begin{bmatrix}
0 & \I & 1 \cr
3\I-\K & 0 & 1 \cr
\K & -1+\J+\K & 0\cr
\end{bmatrix}$. The pole $\pi_A= -\I$ is not a left eigenvalue; in fact
$$\mu(\pi_A)=(-1+\J+\K+1)(3\I-\K-\I)=1-\I+2\J-2\K.$$
We observe that the directional limits
$$\lim\limits_{\varepsilon\to 0}\mu(-\I+\varepsilon q)= -q(\J+\K)q^{-1}(2\I-\K)$$
depend on  $q\in\HH$, hence $\lim\limits_{\lambda\to\pi_A} \mu(\lambda)$ does not exist.
}\end{example}

\begin{theorem}\label{CONTINUIDAD} The characteristic rational map $\mu_A$ is continuous if and only if the pole $\pi_A$ is a left eigenvalue of $A$.
\end{theorem}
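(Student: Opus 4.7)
The plan is to compute the directional limits of $\mu_A$ at $\pi_A$ and compare them with $\mu_A(\pi_A)$; the noncommutativity of $\HH$ will then force the stated characterization.

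Write $Q(\lambda) := q - (r-\lambda)c^{-1} b$ and $F(\lambda) := f - hc^{-1}(a-\lambda)$, both continuous in $\lambda$, and set $Q_0 := Q(\pi_A)$ and $F_0 := F(\pi_A)$, so that $\mu_A(\pi_A) = Q_0 F_0$ by Proposition \ref{CHARTRES}(1). Since $|\mu_A(\lambda)|$ equals $\sdet(A-\lambda\id)$ up to a constant multiplicative factor and $\HH$ is a division algebra, the condition $\pi_A \in \sigma_l(A)$ is equivalent to $Q_0 F_0 = 0$, which is in turn equivalent to $Q_0 = 0$ or $F_0 = 0$.

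Distributing the leading factor $(\pi_A - \lambda)$ in formula (\ref{MURACIONAL}) yields, for $\lambda \neq \pi_A$,
\[
\mu_A(\lambda) = (\pi_A - \lambda)\bigl[p - (r-\lambda)c^{-1}(a-\lambda)\bigr] - (\pi_A - \lambda)\, Q(\lambda)\, (\pi_A - \lambda)^{-1} F(\lambda).
\]
I would then parameterize the approach to the pole by $\lambda = \pi_A - t\omega$ with $t\in\R\setminus\{0\}$ and $\omega$ a unit quaternion. Because the real scalar $t$ is central, $(\pi_A - \lambda)\, Q(\lambda)\, (\pi_A - \lambda)^{-1} = \omega\, Q(\lambda)\, \omega^{-1} \to \omega Q_0 \omega^{-1}$, while the first summand carries an extra factor $t\omega$ and vanishes as $t\to 0$. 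Hence
\[
\lim_{t\to 0}\mu_A(\pi_A - t\omega) = -\omega\, Q_0\, \omega^{-1}\, F_0,
\]
in accordance with the directional limit computed in Example \ref{PoloNoAutovalor}.

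For the ``if'' direction, if $Q_0 F_0 = 0$ then either $Q_0 = 0$ or $F_0 = 0$, so the limit above equals $0 = \mu_A(\pi_A)$ for every unit $\omega$, proving continuity at $\pi_A$. For ``only if'', suppose $\mu_A$ is continuous at $\pi_A$ but $Q_0 F_0 \neq 0$; then both $Q_0$ and $F_0$ are nonzero. If $Q_0$ is real, every directional limit equals the constant $-Q_0 F_0$, and continuity forces $-Q_0 F_0 = Q_0 F_0$, hence $2Q_0 F_0 = 0$, contradicting $Q_0 F_0 \neq 0$. If $Q_0$ is non-real, its conjugacy class $\{\omega Q_0 \omega^{-1} : \omega \in S^3\}$ is a genuine $2$-sphere in $\HH$; since right multiplication by the nonzero quaternion $F_0$ is an $\R$-linear bijection, the map $\omega \mapsto -\omega Q_0 \omega^{-1} F_0$ is non-constant, again contradicting continuity at $\pi_A$.

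The main obstacle is conceptual rather than computational: the factor $(\pi_A-\lambda)\, X\, (\pi_A-\lambda)^{-1}$ looks like a removable singularity, but over $\HH$ it is a direction-dependent conjugation that survives in the limit. The theorem expresses exactly that this obstruction to continuity can be suppressed only when one of the flanking constants $Q_0$ or $F_0$ vanishes, i.e., precisely when $\pi_A$ itself is a left eigenvalue.
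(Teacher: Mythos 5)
Your ``only if'' direction is correct and is essentially the paper's own argument in different clothing: the paper also splits off the conjugated term $(\pi_A-\lambda)q(\lambda)(\pi_A-\lambda)^{-1}f(\lambda)$, notes that the polynomial part carries a vanishing factor, and gets the contradiction by comparing the assumed limit with the limit along \emph{real} sequences (Lemma \ref{MILEMA}), where the conjugation disappears. Your two cases collapse to exactly that observation: taking the single direction $\omega=1$ already forces $-Q_0F_0=Q_0F_0$, hence $Q_0F_0=0$, whether or not $Q_0$ is real; so the real/non-real case split and the $2$-sphere argument, while correct, are superfluous.

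The ``if'' direction as written, however, has a genuine logical gap: you verify only that every \emph{directional} limit $\lim_{t\to 0}\mu_A(\pi_A-t\omega)$ equals $0$, and the existence and equality of all straight-line limits does not imply continuity (the classical counterexample $x^2y/(x^4+y^2)$ on $\R^2$ has all directional limits $0$ at the origin yet is discontinuous there). Fortunately the gap closes with what you already have on the page: for \emph{any} $\lambda\neq\pi_A$, writing $\pi_A-\lambda=t\omega$ in polar form ($t=\vert\pi_A-\lambda\vert$, $\vert\omega\vert=1$) gives
\begin{equation*}
\vert\mu_A(\lambda)\vert \;\le\; t\,\vert P(\lambda)\vert+\vert\omega Q(\lambda)\omega^{-1}\vert\,\vert F(\lambda)\vert \;=\; t\,\vert P(\lambda)\vert+\vert Q(\lambda)\vert\,\vert F(\lambda)\vert,
\end{equation*}
and when $Q_0=0$ or $F_0=0$ the right-hand side tends to $0$ as $\lambda\to\pi_A$ with no restriction on the direction, since it no longer involves $\omega$; this uniform bound is what licenses the conclusion. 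Note that the paper's proof of this implication avoids the issue entirely and is shorter: $\vert\mu_A(\lambda)\vert=\sdet(A-\lambda\id)$ is continuous in $\lambda$, so if $\pi_A$ is an eigenvalue then $\vert\mu_A(\lambda)\vert\to\sdet(A-\pi_A\id)=0$, which already yields $\mu_A(\lambda)\to 0=\mu_A(\pi_A)$.
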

\begin{proof}
Assume that $\pi_A$ is a left eigenvalue. Let $(q_n)_n$ be a sequence converging to $\pi_A$. Then
$\vert \mu(q_n)\vert=\sdet(A-q_n\id)$ converges to $ \sdet(A-\pi_A\id)=0$,
that is $\mu(q_n)\to 0=\mu(\pi_A)$.

Now we shall prove the converse. The map $\mu$ defined in Proposition \ref{CHARTRES} is of the form
\begin{equation}\label{MU}
\mu(\lambda)=(\pi_A-\lambda)\left[p(\lambda)-q(\lambda)(\pi_A-\lambda)^{-1}f(\lambda)\right], \quad \lambda\neq \pi_A,
\end{equation}
while $\mu(\pi_A)=q(\pi_A)f(\pi_A).$ Let as assume that $\lim\limits_{\lambda\to\pi_A}{\mu(\lambda)}$ exists and equals $\mu(\pi_A)$. We must check that $\mu(\pi_A)=0$. If $f(\pi_A)=0$ we have finished. Otherwise we deduce from (\ref{MU}) that
\begin{equation}\label{MU2}
\lim\limits_{\lambda\to\pi_A}{(\lambda-\pi_A)q(\lambda)(\lambda-\pi_A)^{-1}}=-q(\pi_A).
\end{equation}
 From Lemma \ref{MILEMA} it follows  that the limit on the left side equals $q(\pi_A)$, hence $q(\pi_A)=0$, which ends the proof.
 \end{proof}

\begin{lemma}\label{MILEMA} Let $Q=Q(\lambda)$ be a continuous map and suppose that there exists the limit
$l_0=\lim\limits_{\lambda\to 0}{\lambda Q(\lambda)\lambda^{-1}}.$ Then $l_0=Q(0)$.
\end{lemma}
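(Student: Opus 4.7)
The plan is to exploit the fact that the assumed limit is a genuine (directional-independent) quaternionic limit, by approaching $0$ along real rays $\lambda = tq$ where $t \to 0$ in $\reals$ and $q \in \HH \setminus \{0\}$ is fixed.

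First I would observe that for $\lambda = tq$ with $t\in\reals\setminus\{0\}$ and $q\neq 0$, the real scalar $t$ commutes with everything, so
\[
\lambda Q(\lambda) \lambda^{-1} = (tq)\, Q(tq)\, (tq)^{-1} = q\, Q(tq)\, q^{-1}.
\]
Since $Q$ is continuous and left/right multiplication by a fixed quaternion $q$ (resp.\ $q^{-1}$) is continuous on $\HH$, letting $t\to 0$ gives
\[
\lim_{t\to 0}\, \lambda Q(\lambda) \lambda^{-1} = q\, Q(0)\, q^{-1}.
\]

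Next I would use that the global limit $l_0 = \lim_{\lambda\to 0} \lambda Q(\lambda)\lambda^{-1}$ is assumed to exist; in particular, every directional limit must coincide with $l_0$. Therefore $q\, Q(0)\, q^{-1} = l_0$ for every nonzero $q \in \HH$. Specializing to $q = 1$ yields $l_0 = Q(0)$, which is the desired identity.

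There is essentially no obstacle here: the argument is a one-line reduction once one recognizes that the real positive scalar $t$ commutes through the conjugation. As a side remark, the same computation forces $Q(0)$ to commute with every $q\in\HH$, i.e.\ $Q(0)\in\reals$; this is consistent with the use of the lemma in the proof of Theorem~\ref{CONTINUIDAD}, where $-q(\pi_A) = q(\pi_A)$ forces $q(\pi_A)=0$.
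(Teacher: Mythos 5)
Your proof is correct and is essentially the paper's own argument: the paper takes a sequence of real numbers $\varepsilon_n\to 0$ and uses that real scalars commute through the conjugation, so $\varepsilon_n Q(\varepsilon_n)\varepsilon_n^{-1}=Q(\varepsilon_n)\to Q(0)$, which is exactly your computation specialized to the ray $q=1$. Your additional observation that considering all rays forces $Q(0)\in\reals$ is a correct side remark, but it is not needed for the statement itself.
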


\begin{proof}
Take a sequence of real numbers $(\varepsilon_n)_n\to 0$. Then
$$l_0=\lim \varepsilon_n Q(\varepsilon_n)\varepsilon_n^{-1}=\lim Q(\varepsilon_n)=Q(0). $$
\end{proof}

Notice that the differentiability of $\mu$ at the pole $\pi_A$ is not ensured.

It is an open question whether it is always possible or not to find a polynomial, or at least a continuous characteristic map for a given matrix $A$.

\subsection{Extension to the infinite point}
Each of the characteristic maps we have introduced up to now can be extended to the sphere $S^4= \HH\cup\{\infty\}$.
\begin{proposition}
The polynomial maps $\mu$ defined in Subsection \ref{SSC0} verify that $\lim\limits_{\vert \lambda\vert \to\infty}\vert \mu(\lambda)\vert =\infty.$
\end{proposition}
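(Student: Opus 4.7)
The plan is to estimate $|\mu(\lambda)|$ directly for each of the three forms (\ref{BH0}), (\ref{B0}), and (\ref{POLINOMIO}). In every case the strategy is identical: isolate the term of total degree $3$ in $\lambda$ (matching the size of the matrix), show its norm equals a positive constant times $|\lambda|^3$, and bound the remaining terms by $O(|\lambda|^2)$. The triangle inequality $|\mu(\lambda)| \geq (\text{cubic norm}) - (\text{remainder norm})$ then forces the desired divergence as $|\lambda|\to\infty$.

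For (\ref{BH0}), expansion yields $\mu(\lambda) = -\lambda^3 + R(\lambda)$ with $\deg R \leq 2$, so the conclusion is immediate. For (\ref{B0}), I first expand $(r-\lambda)h^{-1}(g-\lambda) = \lambda h^{-1}\lambda + S(\lambda)$ with $\deg S \leq 1$; multiplying by $(a-\lambda)$ gives $\mu(\lambda) = \lambda h^{-1}\lambda^2 + T(\lambda)$ with $\deg T \leq 2$, and multiplicativity of $|\cdot|$ on $\HH$ yields $|\lambda h^{-1}\lambda^2| = |\lambda|^3/|h|$. For (\ref{POLINOMIO}), the nested expansion produces the single cubic monomial $-\lambda h^{-1}\lambda b^{-1}\lambda$ (obtained by selecting the $\lambda$-summand at each nested position), of norm $|\lambda|^3/(|h|\cdot|b|)$, while all other contributions have degree at most $2$.

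The only conceptual point worth attention is noncommutativity: one must confirm that the cubic contribution is a genuine nonzero monomial rather than a sum that might collapse. In each form this is transparent because the cubic part arises by choosing the $-\lambda$ factor at every step in the nested product, producing a single expression of the form $\pm\lambda C_1 \lambda C_2 \lambda$; by multiplicativity of the quaternionic norm its absolute value is $|C_1|\cdot|C_2|\cdot|\lambda|^3$, which is strictly positive since $b$ and $h$ are invertible by the hypotheses under which the corresponding polynomial form was defined in Subsection \ref{SSC0}. Hence no genuine obstacle arises, and the result follows.
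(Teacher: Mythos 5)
Your proof is correct, and it rests on the same two ingredients as the paper's — multiplicativity of the quaternionic norm and the triangle inequality — but organizes the estimate through a different decomposition. The paper never expands the products: for formula (\ref{POLINOMIO}) it takes as dominant term the \emph{intact} triple product $(r-\lambda)h^{-1}(g-\lambda)b^{-1}(a-\lambda)$, whose norm is exactly $\vert r-\lambda\vert\,\vert g-\lambda\vert\,\vert a-\lambda\vert/(\vert h\vert\,\vert b\vert)$, so that the leftover $-p+qb^{-1}(a-\lambda)+(r-\lambda)h^{-1}f$ has degree at most $1$ and dividing the inequality by $\vert\lambda\vert^2$ finishes the argument; in the cases (\ref{BH0}) and (\ref{B0}) the factored form makes the claim essentially immediate. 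You instead expand everything into noncommutative monomials and isolate the unique cubic monomial $\pm\lambda C_1\lambda C_2\lambda$, leaving a remainder of degree at most $2$. The cost of your route is exactly the point you flag: one must verify that the degree-$3$ part is a single monomial rather than a sum of noncommuting terms that could conceivably collapse, and you settle this correctly by noting that the cubic term arises only by choosing the $-\lambda$ factor at each of the three positions, with $b,h$ invertible guaranteeing a nonzero coefficient. What your version buys is uniformity — all three formulas of Subsection \ref{SSC0} are treated by one and the same argument, with the explicit lower bound $\vert\lambda\vert^3/(\vert h\vert\,\vert b\vert) - O(\vert\lambda\vert^2)$ — whereas the paper's grouping sidesteps the cancellation question altogether because its dominant term is never taken apart.
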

\begin{proof}
When $b=0$, the result follows from formulae (\ref{BH0}) and (\ref{B0});
when $b\neq 0$, the expression of $\mu$ is that of formula (\ref{POLINOMIO}),
so
$$
\frac{\vert \mu(\lambda)\vert }{\vert \lambda\vert ^2}\geq
\frac{\vert (r-\lambda)h^{-1}(g-\lambda)b^{-1}(a-\lambda)\vert }{\vert \lambda\vert ^2}
-
\frac{\vert -p+qb^{-1}(a-\lambda)+(r-\lambda)h^{-1}f\vert }{\vert \lambda\vert ^2}.
$$
\end{proof}

\begin{proposition}\label{ADMISIBLE}
The rational map $\mu$ defined in the $c\neq 0$ case  by formula (\ref{MURACIONAL}) can be extended to $S^4=\HH \cup \{\infty\}$ (maybe with a discontinuity at the pole $\pi_A$).
\end{proposition}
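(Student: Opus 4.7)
The plan is to establish that $\vert \mu(\lambda)\vert \to \infty$ as $\vert \lambda\vert \to \infty$; this will allow a continuous extension to $S^4$ by setting $\mu(\infty)=\infty$.

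First I would observe that formula (\ref{MURACIONAL}) has been constructed precisely so that $\vert \mu(\lambda)\vert = \sdet(A-\lambda\id)/\vert c\vert$ for every $\lambda\neq \pi_A$. Indeed, for any such $\lambda$ the matrix $A-\lambda\id$ still satisfies the hypothesis $(g-\lambda)-hc^{-1}b=\pi_A-\lambda\neq 0$, so the first case of Proposition~\ref{CASOCno0} applies and identifies the three factors of $\sdet(A-\lambda\id)$ with $\vert c\vert$, $\vert \pi_A-\lambda\vert$, and the norm of the bracket in (\ref{MURACIONAL}). Thus the question reduces to the asymptotic growth of $\sdet(A-\lambda\id)$ at infinity.

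To obtain this growth I would use Definition~\ref{STUDY}: the quantity $\sdet(A-\lambda\id)^2=\det c(A-\lambda\id)$ is a real polynomial in the coordinates $(x_0,x_1,x_2,x_3)$ of $\lambda=x_0+x_1\I+x_2\J+x_3\K$, of total degree at most $6$, because every entry of the $6\times 6$ complex matrix $c(A-\lambda\id)$ is affine in these coordinates. The homogeneous degree-$6$ part comes from $c(-\lambda\id)$, which is block-diagonal with three copies of the standard $2\times 2$ complex representation of $-\lambda$; each block has determinant $\vert\lambda\vert^2$, so the leading part equals $\vert \lambda\vert^{6}$. The remaining terms of $\det c(A-\lambda\id)$ have degree at most $5$ and are therefore negligible for large $\vert \lambda\vert$. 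Consequently $\sdet(A-\lambda\id)\sim\vert\lambda\vert^3$, and $\vert \mu(\lambda)\vert\to\infty$ as $\vert\lambda\vert\to\infty$.

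Setting $\mu(\infty)=\infty$ then yields an extension $\mu\colon S^4\to S^4$ which is continuous at $\infty$: the preimage of a basic neighborhood $\{\vert w\vert>M\}\cup\{\infty\}$ contains the exterior of a sufficiently large ball in $\HH$, together with $\infty$. A possible discontinuity at $\pi_A$, as in Example~\ref{PoloNoAutovalor}, is explicitly allowed by the statement and need not be addressed. The main subtlety is that the leading-term estimate is carried out on the complex $6\times 6$ representation $c(A-\lambda\id)$ rather than by directly expanding the noncommutative rational formula (\ref{MURACIONAL}), which would be considerably more cumbersome.
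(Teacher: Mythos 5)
Your proof is correct, but it takes a genuinely different route from the paper's. The paper never leaves formula (\ref{MURACIONAL}): writing $\mu(\lambda)=(\pi_A-\lambda)p_2(\lambda)-(\pi_A-\lambda)q_1(\lambda)(\pi_A-\lambda)^{-1}f_1(\lambda)$ with the polynomials $p_2,q_1,f_1$ of (\ref{FORMULAS}), it bounds $\vert \mu(\lambda)\vert \geq \vert (\pi_A-\lambda)p_2(\lambda)\vert - \vert q_1(\lambda)f_1(\lambda)\vert$ (using that conjugation by $\pi_A-\lambda$ preserves the quaternionic norm) and observes that the cubic term dominates the quadratic one, giving $\lim\vert\mu(\lambda)\vert/\vert\lambda\vert^3\geq \vert c^{-1}\vert$. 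You instead invoke the identity $\vert\mu(\lambda)\vert=\sdet(A-\lambda\id)/\vert c\vert$ for $\lambda\neq\pi_A$ --- which is legitimate, since it is exactly how $\mu$ was constructed from Proposition \ref{CASOCno0} applied to $A-\lambda\id$ --- and then read off the growth of the Study determinant from Definition \ref{STUDY}: the degree-$6$ homogeneous part of $\det c(A-\lambda\id)$ is $\det c(-\lambda\id)=\vert\lambda\vert^6$, whence $\sdet(A-\lambda\id)\sim\vert\lambda\vert^3$. Both arguments are sound and both conclude by setting $\mu(\infty)=\infty$. What your route buys is generality: since the same leading-term computation gives $\sdet(A-\lambda\id)\sim\vert\lambda\vert^n$ for any $n\times n$ matrix, it shows at once that \emph{any} characteristic map in the sense of Subsection \ref{DefFC} (polynomial or rational, for any $n$) tends to infinity in norm and so extends to $S^4$, independently of the particular formula used to build it. What the paper's computation buys is a self-contained estimate inside the quaternion algebra, with the explicit asymptotic constant $\vert c^{-1}\vert$ for this specific map, and no appeal to the complex representation. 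Two small remarks on your write-up: $c(-\lambda\id)$ is block-diagonal with three copies of the $2\times 2$ representation of $-\lambda$ only after a simultaneous permutation of rows and columns (harmless, since such a permutation leaves the determinant unchanged); and the value $\det c(-\lambda\id)=\vert\lambda\vert^6$ also follows directly from the paper's normalization $\sdet(\diag(q_1,q_2,q_3))=\vert q_1q_2q_3\vert$.
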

\begin{proof}
We have
$$\mu(\lambda) = (\pi_A-\lambda)p_2(\lambda)-(\pi_A-\lambda)q_1(\lambda)(\pi_A-\lambda)^{-1}f_1(\lambda)
$$
with polynomials
\begin{eqnarray}\label{FORMULAS}
p_2(\lambda)&=&p-(r-\lambda)c^{-1}(a-\lambda), \\\nonumber
q_1(\lambda)&=&q-(r-\lambda)c^{-1}b, \\\nonumber
f_1(\lambda)&=&f-hc^{-1}(a-\lambda).
\end{eqnarray}
Then
$$
\vert \mu(\lambda)\vert  \geq \vert (\pi_A-\lambda)p_2(\lambda)\vert -\vert q_1(\lambda)f_1(\lambda)\vert    $$
and
$$\lim\limits_{\vert \lambda \vert \to \infty}{\frac{\vert\mu(\lambda)\vert}{\vert \lambda \vert^3}}\geq\lim\limits_{\vert \lambda \vert \to \infty}{\frac{\vert(\pi_A-\lambda)p_2(\lambda)\vert}{\vert \lambda \vert^3}}=\lim\limits_{\vert \lambda \vert \to \infty}{\vert c^{-1}\vert\frac{\vert r-\lambda \vert}{\vert \lambda \vert}\frac{\vert a-\lambda \vert}{\vert \lambda \vert}}=\vert c^{-1}\vert.$$
\end{proof}
\subsection{Discontinuous case}
Let us now assume that the characteristic map $\mu_A$ defined in Section \ref{CHARTRES} is not continuous at the pole $\pi_A$, or equivalently that $\pi_A$ is not a left eigenvalue of $A$ (Theorem \ref{CONTINUIDAD}). Then the matrix $B=A-\pi_A\id$ is invertible and its pole is $\pi_B=0$. Moreover $\sigma_l(A)=\sigma_l(B)+\pi_A$. On the other hand, from Proposition \ref{INVERSO} we know that the spectra of $B$ and $B^{-1}$ are diffeomorphic, because
$\sigma_l(B^{-1})=\sigma_l(B)^{-1}$.

\begin{theorem}\label{POLINV} Let $A$ be a matrix such that the pole $\pi_A$ is not a left eigenvalue. Let $B=A-\pi_A\id$. Then the matrix $B^{-1}$ has a polynomial characteristic map.
\end{theorem}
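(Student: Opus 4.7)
The plan is to reduce to Theorem \ref{polinomio}: it suffices to show that $B^{-1}$ has some zero entry outside the diagonal. I would target the $(1,3)$ entry of $B^{-1}$, because the defining property of the pole $\pi_A=g-hc^{-1}b$ is exactly an algebraic dependence among the entries that live in $B^{3,1}$.

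First I would record the setup. Since the pole is defined we have $c\neq 0$, and $B=A-\pi_A\id$ has entries
$$B=\begin{bmatrix} a-\pi_A & b & c\\ f & g-\pi_A & h\\ p & q & r-\pi_A\end{bmatrix},\qquad g-\pi_A=hc^{-1}b.$$
The hypothesis that $\pi_A\notin\sigma_l(A)$ gives $\sdet(B)\neq 0$, so $B$ is invertible and Proposition \ref{JACOBI} is applicable. Taking $I=\{1\}$, $J=\{3\}$, that proposition yields
$$\bigl|(B^{-1})_{1,3}\bigr|=\sdet\bigl((B^{-1})_{\{1\},\{3\}}\bigr)=\frac{\sdet(B^{\{3\},\{1\}})}{\sdet(B)}.$$
The submatrix obtained by deleting row $3$ and column $1$ is
$$B^{\{3\},\{1\}}=\begin{bmatrix} b & c\\ hc^{-1}b & h\end{bmatrix},$$
whose second row is the left multiple $hc^{-1}$ of its first row. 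By item 4 of Corollary \ref{PropSdet2} we may subtract $hc^{-1}$ times row $1$ from row $2$ without altering the Study determinant, producing a matrix with a zero row; hence $\sdet(B^{\{3\},\{1\}})=0$ and consequently $(B^{-1})_{1,3}=0$.

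Having produced a zero off-diagonal entry in $B^{-1}$, Theorem \ref{polinomio} immediately delivers a polynomial characteristic map for $B^{-1}$, finishing the proof. The only place where real thought is needed is identifying \emph{which} entry of $B^{-1}$ must vanish; once one notices that the shift $A\mapsto A-\pi_A\id$ is precisely the one that makes the $(2,2)$ entry of $B$ factor as $hc^{-1}\cdot b$, the relevant $2\times 2$ submatrix automatically becomes rank-deficient and Jacobi's identity does the rest.
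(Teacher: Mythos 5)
Your proof is correct and follows essentially the same route as the paper: both use Jacobi's identity (Proposition \ref{JACOBI}, of which formula (\ref{MINIJACOBI}) cited in the paper is the relevant special case) to show that the $(1,3)$ entry of $B^{-1}$ has norm $\sdet(B^{3,1})/\sdet(B)=0$, and then invoke Theorem \ref{polinomio}. The only cosmetic difference is that you establish $\sdet(B^{3,1})=0$ by an explicit row operation (the second row of $B^{3,1}$ is the left multiple $hc^{-1}$ of the first), whereas the paper notes that this determinant equals $\vert\pi_B\vert=0$ because the pole of $B$ is zero.
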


\begin{proof} Accordingly to formula (\ref{MINIJACOBI}) the norm of the entry $(1,3)$ of the matrix $B^{-1}$ equals
$$\sdet(B^{3,1})/\sdet(B)=\vert\pi_B \vert/\sdet(B)=0,$$ then Theorem \ref{polinomio} applies.
\end{proof}

Here is an alternative proof of Theorem \ref{POLINV}. Let $\mu_B(\lambda)=-\lambda R(\lambda)$, with $$R(\lambda)=p(\lambda)+q(\lambda)\lambda^{-1}f(\lambda), \quad \lambda\neq 0,$$  be the characteristic map
  given in (\ref{CHARTRES}) (we assume $\pi_B=0$.) Then it is immediate that $\lambda R(\lambda^{-1})\lambda$ is a polynomial in $\lambda$ (of degree $3$ and independent term $-c^{-1}$) and we only have  to apply the following result.

\begin{proposition}\label{CARINVERSO} Let $\mu_B$ be a characteristic map of the invertible matrix $B=A-\pi_A\id$, with a discontinuity at the pole $\pi_B=0$. Then
$$\mu_{B^{-1}}(\lambda)=\sdet(B)^{-1}\lambda^2\mu_B(\lambda^{-1})\lambda, \quad \lambda\neq 0,$$ is a characteristic map for $B^{-1}$.
\end{proposition}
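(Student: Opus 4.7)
The plan is to verify directly from the definition of characteristic map: I will show that the formula
$\mu_{B^{-1}}(\lambda)=\sdet(B)^{-1}\lambda^{2}\mu_{B}(\lambda^{-1})\lambda$
has norm equal, up to a positive constant, to $\sdet(B^{-1}-\lambda\id)$. Since $\lambda=0$ cannot be a left eigenvalue of the invertible matrix $B^{-1}$, there is no loss in restricting to $\lambda\neq 0$, and for any such $\lambda$ we evaluate $\mu_B$ at $\lambda^{-1}\neq 0=\pi_B$, away from the discontinuity.

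First I take quaternionic norms. Using multiplicativity of $|\cdot|$ and the hypothesis that $|\mu_{B}(\lambda^{-1})|=K\cdot\sdet(B-\lambda^{-1}\id)$ for some constant $K>0$, this gives
\[
|\mu_{B^{-1}}(\lambda)| \;=\; K\,\sdet(B)^{-1}\,|\lambda|^{3}\,\sdet(B-\lambda^{-1}\id).
\]
The whole proof therefore reduces to establishing the matrix identity
\[
\sdet(B^{-1}-\lambda\id) \;=\; \sdet(B)^{-1}\,|\lambda|^{3}\,\sdet(B-\lambda^{-1}\id), \qquad \lambda \neq 0.
\]

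To prove this identity I will produce two matrix factorizations and invoke the multiplicativity of $\sdet$ (Proposition~\ref{propiedadessdet}). The first is $(\id-\lambda^{-1}B^{-1})B = B-\lambda^{-1}\id$, which I verify entry-wise: the scalar $\lambda^{-1}$ sitting on the left of $B^{-1}$ can be pulled across the sum $\sum_{k}(B^{-1})_{ik}B_{kj}=\delta_{ij}$ by associativity of quaternion multiplication. Together with Proposition~\ref{propiedadessdet} this gives $\sdet(\id-\lambda^{-1}B^{-1}) = \sdet(B-\lambda^{-1}\id)/\sdet(B)$. The second factorization is $(-\lambda\id)(\id-\lambda^{-1}B^{-1}) = B^{-1}-\lambda\id$, again by an entry-wise check using the cancellation $\lambda\cdot\lambda^{-1}=1$. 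Since $\sdet(-\lambda\id)=|\lambda|^{3}$ by the diagonal normalization of Study's determinant, combining both factorizations yields the desired identity, and hence $|\mu_{B^{-1}}(\lambda)| = K\,\sdet(B^{-1}-\lambda\id)$.

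The main subtlety is the noncommutativity of $\HH$: both factorizations work precisely because the scalars $\lambda^{-1}$ and $-\lambda$ are placed on the side from which they can cancel via associativity against $B^{-1}B=\id$, rather than becoming entangled with the entries of $B^{-1}$. Reversing the order of either factorization would produce a genuinely different matrix, since in general $\lambda B^{-1}\neq B^{-1}\lambda$ entry-wise. Once one is disciplined about sides, the computation is routine and the proposition follows immediately.
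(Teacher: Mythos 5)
Your proof is correct and takes essentially the same route as the paper: both reduce the claim, via the multiplicativity of $\sdet$ (Proposition \ref{propiedadessdet}) and the normalization $\sdet(q\id)=\vert q\vert^{3}$, to the identity $\vert\lambda\vert^{-3}\sdet(B^{-1}-\lambda\id)\cdot\sdet(B)=\sdet(B-\lambda^{-1}\id)$, and then take quaternionic norms in the defining formula. The only (cosmetic) difference is that the paper gets this identity in one step from the single factorization $(\lambda^{-1}\id)(B^{-1}-\lambda\id)B=\lambda^{-1}\id-B$, while you pass through the intermediate matrix $\id-\lambda^{-1}B^{-1}$ with two factorizations.
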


\begin{proof}
From Proposition \ref{propiedadessdet} we have
$$\sdet(\lambda^{-1}\id)\cdot\sdet(B^{-1}-\lambda\id)\cdot\sdet(B)=\sdet(\lambda^{-1}\id-B)$$
then
$$\vert\lambda^{-3}\vert\cdot \sdet(B^{-1}-\lambda\id)\cdot\sdet(B)=\vert \mu_B(\lambda^{-1})\vert.$$
\end{proof}

\begin{remark}\label{TRICK}{\rm The idea that a rational map like $R(\lambda)$ can be converted into a polynomial $\lambda^{-1}R(\lambda)\lambda^{-1}$ with variable $\lambda^{-1}$ is due to So (see \cite[Lemma 3.5, p.~558]{SO2005}).
}\end{remark}

\section{TOPOLOGICAL STUDY OF THE  $3\times3$ CASE}\label{3x3Topologico}

We shall consider separately the polynomial, rational continuous and discontinuous cases considered in Section \ref{CHARMAP3x3}.

\subsection{Polynomial case}
Let us start with matrices having some null entry outside the diagonal (as we have seen this case can be reduced to the case  $c=0$). We know  that  the characteristic map is a polynomial of degree  $3$ which can be extended in a continuous way to the sphere  $S^4$. Then  since there is a unique term of higher degree $3$, the map $\mu_A$ is homotopic to $\lambda^3$, so it has topological degree $3$.
\begin{proposition} Let $\lambda$ be a left eigenvalue of the matrix
$A$ with  $c=0$. Then the differential of the polynomial characteristic map $\mu_A$ in Subsection \ref{SSC0} is given by
\begin{enumerate}
\item if $b,h=0$ then $$\mu_{*\lambda}(X) =-X(g-\lambda)(a-\lambda)-(r-\lambda)(g-\lambda)X-(r-\lambda)X(a-\lambda);$$
\item if $b=0, h\neq 0$ then
\begin{eqnarray*}
&&\mu_{*\lambda}(X) =\\
&&\quad Xh^{-1}(g-\lambda)(a-\lambda)-\left(q-(r-\lambda)h^{-1}(g-\lambda)\right)X+(r-\lambda)h^{-1}X(a-\lambda);
\end{eqnarray*}
\item otherwise,
\begin{eqnarray*}
\mu_{*\lambda}(X) &=&\left(qb^{-1}-(r-\lambda)h^{-1}(g-\lambda)b^{-1}\right)X \\
&&\quad+Xh^{-1}\left(f-(g-\lambda)b^{-1}(a-\lambda)\right)-(r-\lambda)h^{-1}Xb^{-1}(a-\lambda).
\end{eqnarray*}
\end{enumerate}
\end{proposition}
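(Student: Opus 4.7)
The plan is to treat each of the three formulas as a direct application of the product rule (Lemma \ref{ReglasDerivacion}, part (1)) to the characteristic maps exhibited in Subsection \ref{SSC0}, using two elementary building blocks: if $c\in\HH$ is a constant, then $(\lambda\mapsto c)_{*\lambda}(X)=0$; and the affine map $\lambda\mapsto c-\lambda$ has differential $X\mapsto -X$. Note that the inverses $b^{-1}$ and $h^{-1}$ that appear are constant quaternions, not values of a variable map, so part (2) of Lemma \ref{ReglasDerivacion} is not needed.

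For case (1), I would simply apply the product rule twice to the triple product $\mu(\lambda)=(r-\lambda)(g-\lambda)(a-\lambda)$ given in (\ref{BH0}), obtaining three summands in which exactly one factor is replaced by the differential $-X$. The signs combine to give the announced formula.

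For case (2), starting from (\ref{B0}), I would set $F(\lambda)=q-(r-\lambda)h^{-1}(g-\lambda)$ and $G(\lambda)=a-\lambda$ and differentiate the product $\mu=F\cdot G$. Applying the product rule to the term $(r-\lambda)h^{-1}(g-\lambda)$ (viewed as a product of the affine maps, with $h^{-1}$ absorbed as a constant) yields $F_{*\lambda}(X)=Xh^{-1}(g-\lambda)+(r-\lambda)h^{-1}X$; then $\mu_{*\lambda}(X)=F_{*\lambda}(X)(a-\lambda)+F(\lambda)(-X)$ gives the three terms of the stated formula after substituting back $F(\lambda)=q-(r-\lambda)h^{-1}(g-\lambda)$.

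For case (3), I would split $\mu(\lambda)$ in (\ref{POLINOMIO}) into three summands: the constant $p$, the term $-qb^{-1}(a-\lambda)$ (whose differential is $qb^{-1}X$), and the more complex expression $-(r-\lambda)h^{-1}H(\lambda)$, where $H(\lambda)=f-(g-\lambda)b^{-1}(a-\lambda)$. Applying the product rule to $H$ yields $H_{*\lambda}(X)=Xb^{-1}(a-\lambda)+(g-\lambda)b^{-1}X$, and then applying it again to $-(r-\lambda)h^{-1}H(\lambda)$ produces $Xh^{-1}H(\lambda)-(r-\lambda)h^{-1}H_{*\lambda}(X)$. Adding the three contributions and grouping by the position of $X$ gives precisely the asserted expression.

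The whole argument is therefore routine; the only real care needed is bookkeeping of the minus signs coming from the differentials of the affine factors $c-\lambda$ and keeping track of the position of $X$ relative to the constant quaternions $b^{-1}$, $h^{-1}$ (since left and right multiplication do not commute in $\HH$). The hypothesis that $\lambda$ is a left eigenvalue plays no role in the computation of the differential itself — it only guarantees that the differential is being evaluated at a point of $\mu_A^{-1}(0)$, which is the geometrically relevant locus for the subsequent rank analysis in the style of Section \ref{ORDERTWO}.
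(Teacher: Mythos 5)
Your proposal is correct and follows essentially the same route as the paper, whose entire proof is the single remark that the formulas are ``a direct application of the derivation rules given in Lemma \ref{ReglasDerivacion}''. Your sign bookkeeping and the placement of $X$ relative to the constants $b^{-1}$, $h^{-1}$ check out in all three cases, and your observation that the eigenvalue hypothesis plays no role in the differentiation itself (only in locating where the rank is subsequently analysed) is also accurate.
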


The proof is a direct application of the derivation rules given in Lemma \ref{ReglasDerivacion}.

The expressions obtained are of the form  $PX+XQ+RXS=0$, whose rank can be computed with the method given in Section  \ref{ResEcLineales}.

\begin{example} {\rm Let $A=\begin{bmatrix}
a & 0 & 0 \cr
f & g & 0 \cr
p & q & r\cr
\end{bmatrix}$ be a triangular matrix.
The differential $\mu_{*\lambda}(X)$ of the characteristic map
$$\mu(\lambda)=(r-\lambda)(g-\lambda)(a-\lambda),$$
at the eigenvalues $\lambda=a,  g, r$ is given, respectively, by
$(a-r)(g-a)X$, $(g-r)X(a-g)$ and
$(r-g)(a-r)X$. Hence, unlike the case $n=2$,  the rank  depends on the multiplicity of each eigenvalue, and can be either $0$ or $4$.
}\end{example}

\subsection{Rational case} When none of the entries outside the diagonal is zero, the characteristic map is a rational function, with a distinguished point $\pi_A$.

Let us first suppose that the pole $\pi_A$ is a left eigenvalue. We know from (\ref{MURACIONAL}) that $\mu_A$ is a continuous map  on $S^4$ of the form
$$(\pi_A-\lambda)\left[p(\lambda)-q(\lambda)(\pi_A-\lambda)^{-1} f(\lambda)\right].$$  By examining formulae (\ref{FORMULAS}) it is clear that $p(\lambda)$ is homotopic to $-\lambda^2$ by the homotopy
  $$t p-(tr-\lambda)(1-t+tc^{-1})(ta-\lambda), \quad t\in [0,1].$$
 Analogously $q(\lambda)\sim \lambda$ and $f(\lambda)\sim \lambda$, so $\mu_A(\lambda)$ is homotopic to $$(\pi_A-\lambda)\left[-\lambda^2-\lambda(\pi_A-\lambda)^{-1}\lambda\right]$$ (notice that this map is continuous at $\lambda=0$), which in turn is homotopic to $\lambda^3-\lambda^2$ by the homotopy
$$(t\pi_A-\lambda)\left[-\lambda^2-\lambda(t\pi_A-\lambda)^{-1}\lambda\right].$$ Finally the homotopy $\lambda^2(\lambda-t)$ shows that the map $\mu_A$ is homotopic to $\lambda^3$. All these homotopies can be extended to the infinity.

Hence we have proved

 \begin{proposition} When the rational characteristic map $\mu_A$ is continuous  it has topological degree $3$.
 \end{proposition}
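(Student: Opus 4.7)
The plan is to show that $\mu_A$, regarded as a self-map of $S^4 = \HH \cup \{\infty\}$ via the extension from Proposition~\ref{ADMISIBLE}, is homotopic to the power map $\lambda \mapsto \lambda^3$, and then to apply homotopy invariance of the topological degree together with the fact that $\lambda^3 \colon S^4 \to S^4$ has degree $3$ (an immediate analogue of Proposition~\ref{POWER}, established by the same Eilenberg--Steenrod argument).

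First, using the explicit expressions (\ref{FORMULAS}), I would interpolate each polynomial factor to its leading monomial: replace $p(\lambda)$, $q(\lambda)$, $f(\lambda)$ by the one-parameter families $t p - (tr - \lambda)(1-t+tc^{-1})(ta-\lambda)$, $t q - (tr-\lambda)(1-t+tc^{-1})b_t$, and $t f - h_t c^{-1}(ta-\lambda)$ for suitable choices (with $b_t, h_t$ deformations of $b, h$ to $1$), obtaining $-\lambda^2$, $\lambda$, and $\lambda$ at $t=0$. Substituting into formula (\ref{MURACIONAL}) yields a continuous family of rational maps ending in the map
\[
(\pi_A - \lambda)\bigl[-\lambda^2 - \lambda(\pi_A - \lambda)^{-1}\lambda\bigr].
\]
Second, I would contract the pole with the homotopy
\[
(t\pi_A - \lambda)\bigl[-\lambda^2 - \lambda(t\pi_A - \lambda)^{-1}\lambda\bigr], \qquad t \in [0,1],
\]
which at $t=0$ collapses to $\lambda^3 - \lambda^2$ on $\lambda \neq 0$ and extends continuously across $\lambda = 0$. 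Third, the affine homotopy $\lambda^2(\lambda - t)$ takes $\lambda^3 - \lambda^2$ to $\lambda^3$.

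The main obstacle is to verify that every intermediate map in the first family is genuinely continuous on all of $S^4$, including at the moving pole. The pole at parameter $t$ is $\pi_t = g_t - h_t c^{-1} b_t$ (where $g_t$ similarly interpolates), and continuity there must be controlled by the analogue of Theorem~\ref{CONTINUIDAD}: one needs $q_t(\pi_t) f_t(\pi_t) = 0$ throughout the deformation. Since at $t=1$ this holds by hypothesis, the cleanest route is to choose the interpolation so that the product $q_t(\pi_t)f_t(\pi_t)$ depends on $t$ in a controlled way, or alternatively to factor out the common zero before deforming. Extension to $\infty$ presents no difficulty, since the leading-order growth estimate in Proposition~\ref{ADMISIBLE} persists along the homotopy (the leading $\lambda^3$ behavior is what is being preserved). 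Once continuity on $S^4$ is secured at every stage, homotopy invariance of degree completes the argument.
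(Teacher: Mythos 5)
Your outline is precisely the paper's own proof: the same interpolation of $p,q,f$ to their leading monomials, the same pole-contracting homotopy $(t\pi_A-\lambda)\bigl[-\lambda^2-\lambda(t\pi_A-\lambda)^{-1}\lambda\bigr]$, the same final homotopy $\lambda^2(\lambda-t)$, then homotopy invariance of degree. The difference is that you explicitly flag the crux --- continuity of every intermediate map at the pole --- and then leave it unresolved (``choose the interpolation so that $q_t(\pi_t)f_t(\pi_t)$ depends on $t$ in a controlled way''). That is a genuine gap, and it is worth recording that the paper's text does not close it either: the end map of the first family,
$$\nu(\lambda)=(\pi_A-\lambda)\left[-\lambda^2-\lambda(\pi_A-\lambda)^{-1}\lambda\right],$$
is in fact \emph{discontinuous} at $\pi_A$ whenever $\pi_A\notin\R$. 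Indeed, putting $\lambda=\pi_A-su$ with $\vert u\vert=1$, $s\to 0^+$, one gets
$$(\pi_A-\lambda)\lambda(\pi_A-\lambda)^{-1}\lambda=(u\pi_Au^{-1}-su)(\pi_A-su)\longrightarrow u\pi_Au^{-1}\pi_A,$$
so $\nu(\lambda)\to -u\pi_Au^{-1}\pi_A$, which depends on the direction $u$; for $\pi_A=\I$ the limits along $u=1$ and $u=\J$ are $1$ and $-1$. (Your continuity criterion $q_t(\pi_t)f_t(\pi_t)=0$ is exactly right, since the norm of the sandwiched term is $\vert q_t(\lambda)\vert\,\vert f_t(\lambda)\vert$; the point is that the naive interpolation, yours and the paper's, violates it: at $t=0$ one has $q_0(\pi_A)f_0(\pi_A)=\pi_A^2\neq 0$.) A chain of homotopies passing through discontinuous maps does not permit the invocation of homotopy invariance, so as written neither your argument nor the paper's is complete.

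Your alternative suggestion --- factor out the common zero \emph{before} deforming --- is the one that works, and it should be carried out rather than mentioned. By Theorem~\ref{CONTINUIDAD}, continuity of $\mu_A$ means $q_1(\pi_A)f_1(\pi_A)=0$ in the notation of (\ref{FORMULAS}); say $f_1(\pi_A)=0$ (the case $q_1(\pi_A)=0$ is symmetric). Since $f_1(\lambda)=f-hc^{-1}(a-\lambda)$ is affine with invertible leading coefficient $hc^{-1}$, the vanishing forces $f_1(\lambda)=hc^{-1}(\lambda-\pi_A)$, whence
$$\mu_A(\lambda)=(\pi_A-\lambda)\left[p_2(\lambda)+q_1(\lambda)(\pi_A-\lambda)^{-1}\,hc^{-1}\,(\pi_A-\lambda)\right],$$
where $(\pi_A-\lambda)^{-1}$ now occurs only inside the conjugation $X\mapsto(\pi_A-\lambda)^{-1}X(\pi_A-\lambda)$; the entire second summand has norm $\vert q_1(\lambda)\vert\,\vert hc^{-1}\vert\,\vert\pi_A-\lambda\vert\to 0$. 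Now deform the three outer ingredients inside this fixed shape: $p_2$ to $-\lambda^2$, $q_1$ to $\lambda$, and the constant $hc^{-1}$ to $1$. Every intermediate map is continuous on all of $S^4$ (it vanishes at the pole by the same norm estimate, and the growth estimate of Proposition~\ref{ADMISIBLE} persists at $\infty$), and the end map is the honest cubic polynomial $(\pi_A-\lambda)(\lambda-\lambda^2)$, which is homotopic to $\lambda^3$ exactly as in the polynomial case. With this one extra step your proof (and the paper's) becomes complete; without it, the degree count is unjustified.
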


On the other hand, suppose that $\pi_A$ is not a left eigenvalue. Then the polynomial case applies to $(A-\pi_A\id)^{-1}$ by Theorem \ref{POLINV}. So we do not have to use the local theory of degree, whose main difficulty is the need of considering homotopies which are {\em admissible} with respect to  the domain $\Omega$ of definition, see \cite[p.~28]{AMBROSETTI}.
%

\begin{corollary} Any $3\times 3$ quaternionic matrix has at least one left eigenvalue.
\end{corollary}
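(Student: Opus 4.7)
The plan is to split into the three cases for the characteristic map $\mu_A$ established in Section \ref{CHARMAP3x3} and handle each using results already proved.

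\textbf{Case A (polynomial characteristic map).} Suppose $A$ has some zero entry outside the diagonal, so $\mu_A$ is one of the polynomials in Subsection \ref{SSC0}. By Proposition \ref{ADMISIBLE}'s polynomial analogue (the preceding Proposition in Section \ref{CHARMAP3x3}), $|\mu_A(\lambda)|\to\infty$ as $|\lambda|\to\infty$, so $\mu_A$ extends continuously to $\mu_A\colon S^4\to S^4$ sending $\infty$ to $\infty$. The argument at the start of Section \ref{3x3Topologico} shows $\mu_A$ is homotopic to $\lambda^3$ and hence has topological degree $3\neq 0$. By the Proposition on continuous sphere maps of nonzero degree (cited from Massey), $\mu_A$ is surjective; in particular $0\in\mu_A(S^4)$, and since $\mu_A(\infty)=\infty$ the preimage lies in $\HH$. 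This preimage is a left eigenvalue.

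\textbf{Case B (rational continuous characteristic map).} Suppose $c\neq 0$ and $\mu_A$ is continuous at the pole $\pi_A$. By Theorem \ref{CONTINUIDAD}, continuity at $\pi_A$ already forces $\pi_A$ to be a left eigenvalue of $A$, so there is nothing left to prove.

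\textbf{Case C (rational discontinuous characteristic map).} Suppose $c\neq 0$ and $\mu_A$ is not continuous at $\pi_A$. By Theorem \ref{CONTINUIDAD}, $\pi_A$ is not a left eigenvalue of $A$, so the matrix $B=A-\pi_A\id$ is invertible. Theorem \ref{POLINV} then guarantees that $B^{-1}$ admits a polynomial characteristic map. Applying Case A to $B^{-1}$ produces a left eigenvalue $\lambda_0\in\sigma_l(B^{-1})$. Because $B^{-1}$ is invertible, $0\notin\sigma_l(B^{-1})$, hence $\lambda_0\neq 0$. By Proposition \ref{INVERSO}, $\lambda_0^{-1}\in\sigma_l(B)$, and since $\sigma_l(A)=\sigma_l(B)+\pi_A$ we conclude that $\pi_A+\lambda_0^{-1}$ is a left eigenvalue of $A$.

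The work really happens in Case A, which invokes the homotopy computation and the degree-surjectivity result for $S^4$; the remaining cases are reductions to Case A (or to the pole itself). The main conceptual point is that the discontinuity in Case C is precisely what lets us pass to $B^{-1}$, where Theorem \ref{POLINV} restores a polynomial characteristic map and circumvents any need for a local degree argument on the bounded domain $S^4\setminus\{\pi_A\}$.
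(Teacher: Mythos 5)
Your proof is correct, and its skeleton---the three-way split into polynomial, continuous rational, and discontinuous rational characteristic maps, with the degree-$3$/surjectivity argument doing the work in the polynomial case and Proposition \ref{INVERSO} together with Theorem \ref{POLINV} handling the discontinuous case---is exactly the paper's. The one place you genuinely diverge is Case B: the paper runs the continuous rational case through the same topological machinery (it proves that the continuous $\mu_A$ is homotopic to $\lambda^3$, hence has degree $3$, hence is surjective), whereas you invoke the ``only if'' direction of Theorem \ref{CONTINUIDAD} to conclude that continuity of $\mu_A$ already forces the pole $\pi_A$ to be a left eigenvalue, so existence in that case is immediate and needs no topology at all. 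Your shortcut is legitimate and is the cheaper route for bare existence; what the paper's uniform degree-$3$ treatment buys is more than existence, since the degree computation for the continuous rational case also feeds into Theorem \ref{REGULAR} for counting eigenvalues, and it lets the corollary be proved in one line (``in all cases the eigenvalues, or their inverses, are the roots of a continuous map of degree $3$''). One detail you handle more explicitly than the paper does: in Case C you note that $0\notin\sigma_l(B^{-1})$ because $B^{-1}$ is invertible, so the eigenvalue $\lambda_0$ of $B^{-1}$ produced by Case A can indeed be inverted and shifted back to the eigenvalue $\pi_A+\lambda_0^{-1}$ of $A$; the paper leaves this (easy) point implicit.
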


\begin{proof} In all cases the eigenvalues (or its inverses) can be computed as the roots of a continuous map $\mu$ of degree $3$; then, $\mu^{-1}(0)$ is not void (see Section \ref{TOPOL}).
\end{proof}

\subsection{Final remarks}
 In order to simplify the computation of the rank, by taking $B=A-\pi_A\id$ we  can  always assume    that the pole is $\pi_B=0$.

\begin{proposition}
For a  $3\times 3$ matrix with $c\neq 0$ the differential of the characteristic map  $\mu$ given in formula (\ref{MURACIONAL}) at the point $\lambda\neq \pi_B=0$ is
\begin{eqnarray*}
&&\mu_{*\lambda}(X)= \\
 &&X\left[-p+(r-\lambda)c^{-1}(a-\lambda)+(q-(r-\lambda)c^{-1}b)(-\lambda)^{-1}(f-hc^{-1}(a-\lambda))\right]  \\
&&\quad +(-\lambda)Xc^{-1}(a-\lambda)-(-\lambda)Xc^{-1}b(-\lambda)^{-1}(f-hc^{-1}(a-\lambda)) \\
&&\quad-(-\lambda)(q-(r-\lambda)c^{-1}b)(-\lambda)^{-1}X(-\lambda)^{-1}(f-hc^{-1}(a-\lambda)) \\
&&\quad+\left[(-\lambda)(r-\lambda)c^{-1}-(-\lambda)(q-(r-\lambda)c^{-1}b)(-\lambda)^{-1}hc^{-1}\right]X.
\end{eqnarray*}
\end{proposition}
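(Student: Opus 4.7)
The plan is to apply the two rules of Lemma \ref{ReglasDerivacion} (product and inverse) directly to the rational expression (\ref{MURACIONAL}) specialised to $\pi_B = 0$. Introduce abbreviations for the three ``polynomial factors'' that appear inside the bracket,
\begin{align*}
p_2(\lambda) &= p - (r-\lambda)c^{-1}(a-\lambda),\\
q_1(\lambda) &= q - (r-\lambda)c^{-1}b,\\
f_1(\lambda) &= f - hc^{-1}(a-\lambda),
\end{align*}
together with the outer factor $\alpha(\lambda) = \pi_B - \lambda = -\lambda$, so that the map reads
$$\mu(\lambda) = \alpha(\lambda)\bigl[p_2(\lambda) - q_1(\lambda)\,\alpha(\lambda)^{-1} f_1(\lambda)\bigr].$$

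Next I would compute the differentials of the atomic pieces. Since the maps $\lambda\mapsto r-\lambda$ and $\lambda\mapsto a-\lambda$ have differential $X\mapsto -X$, the product rule of Lemma \ref{ReglasDerivacion}(1) gives
$(p_2)_{*\lambda}(X) = Xc^{-1}(a-\lambda) + (r-\lambda)c^{-1}X$,
$(q_1)_{*\lambda}(X) = Xc^{-1}b$,
$(f_1)_{*\lambda}(X) = hc^{-1}X$, and $\alpha_{*\lambda}(X) = -X$. The inverse rule of Lemma \ref{ReglasDerivacion}(2) applied to $\alpha$ yields
$(\alpha^{-1})_{*\lambda}(X) = -\alpha(\lambda)^{-1}(-X)\alpha(\lambda)^{-1} = (-\lambda)^{-1}X(-\lambda)^{-1}$, which is the only place where a ``two-sided'' term will arise.

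Finally, assemble. Let $u(\lambda) = p_2(\lambda) - q_1(\lambda)\alpha(\lambda)^{-1}f_1(\lambda)$. Differentiating the triple product $q_1\alpha^{-1}f_1$ produces three summands, and the outer product $\mu = \alpha\cdot u$ gives the formula
$$\mu_{*\lambda}(X) = \alpha_{*\lambda}(X)\,u(\lambda) + \alpha(\lambda)\,u_{*\lambda}(X) = -X\,u(\lambda) + (-\lambda)\,u_{*\lambda}(X).$$
Substituting the expressions for $(p_2)_{*\lambda}$, $(q_1)_{*\lambda}$, $(f_1)_{*\lambda}$ and $(\alpha^{-1})_{*\lambda}$, the first summand gives the initial $X[\,\cdot\,]$ block of the stated formula (with sign $-u(\lambda)$ rewritten as $-p_2 + q_1(-\lambda)^{-1}f_1$); the second summand expands into four terms, corresponding respectively to the derivatives of $p_2$, of $q_1$, of $\alpha^{-1}$, and of $f_1$, producing the remaining lines of the proposition.

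The step is entirely routine: the only real care needed is bookkeeping of left versus right positioning of $X$ in each term, since quaternion multiplication is noncommutative, and tracking signs arising from $(r-\lambda)' = (a-\lambda)' = -\id$ together with the outer minus from $\alpha_{*\lambda} = -\id$. No topological or algebraic obstacle is involved; the computation is the direct analogue, for the rational characteristic map, of the differentiation that led to formula (\ref{DIFER}) in the $2\times 2$ setting.
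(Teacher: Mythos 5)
Your proposal is correct and follows exactly the route the paper intends: the paper treats this proposition as a direct application of the derivation rules of Lemma \ref{ReglasDerivacion}, and your decomposition $\mu = \alpha\cdot\bigl[p_2 - q_1\alpha^{-1}f_1\bigr]$ with $\alpha(\lambda)=-\lambda$, together with the atomic differentials $(p_2)_{*\lambda}(X)=Xc^{-1}(a-\lambda)+(r-\lambda)c^{-1}X$, $(q_1)_{*\lambda}(X)=Xc^{-1}b$, $(f_1)_{*\lambda}(X)=hc^{-1}X$ and $(\alpha^{-1})_{*\lambda}(X)=(-\lambda)^{-1}X(-\lambda)^{-1}$, reproduces every term of the stated formula. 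The only cosmetic imprecision is that the derivative of $p_2$ contributes two terms which end up split across different lines of the displayed formula, but this does not affect correctness.
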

When $\mu_A$ is continuous but not differentiable at the pole $\pi_A$ the rank at $\pi_A$ could be computed by taking a different characteristic map $\mu_{PAP^{-1}}$.
In particular, by moving around the entries outside the diagonal of a given matrix (see Remark~\ref{PERMUTA} of Section~\ref{CNO0}) one can obtain up to six different characteristic maps, each one with a different pole. 

It is an open question whether there exists a matrix $A$ verifying that all the poles of the matrices $PAP^{-1}$, with $P$ real, are eigenvalues. If such an example does not exist then the non-polynomial case would not be necessary.

\section{EXAMPLES}\label{EXAMPLES}
Here are some miscellaneous examples.

\begin{example} Discontinuous map.
{\rm
Let
$A$ be the matrix given in Example \ref{PoloNoAutovalor}. Then
$$B=A-\pi_A\id=
\begin{bmatrix}\I & \I & 1\cr
3\I-\K & \I & 1\cr
\K & -1+\J+\K & \I\cr
\end{bmatrix}$$ is an invertible matrix with pole $\pi_B=0$.
By computing the quasideterminants we obtain the inverse matrix
$$B^{-1}=\frac{1}{10}\begin{bmatrix}
4\I-2\K & -4\I+2\K & 0 \cr
-1-3\I+8\J-6\K & 1+3\I-3\J+\K & -5\J-5\K \cr
11+\I-8\J-8\K & -1-\I+3\J+3\K & -5\J+5\K \cr
\end{bmatrix}
.$$
Its polynomial characteristic map is
\begin{eqnarray}\label{CALCULO}
&&\quad\mu_{B^{-1}}(\lambda)=\\ \nonumber
&&10-\lambda\I-2\I\lambda-\frac{1}{10}\I\lambda(2\I-\K)\lambda-\frac{1}{10}\lambda(1+\J+2\K)\lambda-\frac{1}{100}\lambda(\J+\K)\lambda(2\I-\K)\lambda.
\end{eqnarray}
On the other hand, the rational characteristic map for $B$ is
$$\mu_B(\lambda) = -\lambda R(\lambda)=
 -\lambda\left[ 1+\K+\I\lambda+\lambda\I-\lambda^2+(\J+\K+\lambda\I)\lambda^{-1}(2\I-\K+\lambda)\right]
$$
and one can check that $\lambda R(\lambda^{-1})\lambda$ equals (up to a constant) the map (\ref{CALCULO}).
}\end{example}

\begin{example} An eigenvalue of rank $3$.
{\rm
Let
$$A=\begin{bmatrix}\J & 1 & 0 \cr
2\I & -\K & 1 \cr
2-\I-2\J & -1-\J+\K & -\I-\K\cr
\end{bmatrix}.$$ The characteristic map is
$$\mu(\lambda)=2-\I-2\J+(1+\J-\K)(\J-\lambda)+(\I+\K+\lambda)\left(2\I+(\K+\lambda)(\J-\lambda)\right).$$ For the left eigenvalue  $\lambda=0$ the differential is $$\mu_{*0}(X)=\K X+X\I+(\I+\K)X\J,$$  whose real associated matrix
$M=\begin{bmatrix} 0 & -2 & 0 & 0 \cr
 0 & 0 & -2 & 0 \cr
 0 & 0 & 0 & 0 \cr
 2 & 0 & -2 & 0\cr
 \end{bmatrix}$
 has rank $3$.
}\end{example}

\begin{example} A matrix which can not be reduced to the polynomial case $c=0$.
{\rm We shall exhibit a matrix $A\in \M_{3\times 3}(\quaternions)$ such that for any real invertible matrix $P$  all entries in the matrix $PAP^{-1}$ outside the diagonal are not null. Let $A=T+\I X + \J Y + \K Z$, with $T,X,Y,Z\in \M_{3\times 3}(\reals)$. Then
$$PAP^{-1}=PTP^{-1}+\I PXP^{-1} + \J PYP^{-1} + \K PZP^{-1},$$ which means that the matrix $PAP^{-1}$ has a null entry if and only if the same happens for the real matrices $PTP^{-1}$, $PXP^{-1}$, $PYP^{-1}$ and  $PZP^{-1}$. Moreover, from Remark \ref{PERMUTA} we can suppose that the null entry is at the place $(1,3)$.
Now, recall  that $A$  is the matrix associated to a linear map $\HH^3 \to \HH^3$ with respect to the canonical basis $e_1,e_2,e_3$ and that the real matrix $P=(p_{ij})$ represents the change to another basis $v_1,v_2,v_3$, that is $v_j=\sum_i{p^{ij}e_i}$ where $P^{-1}=(p^{ij})$.  Assume that $T,X$ are the matrices associated to two rotations $\R^3 \to \R^3$ with the same rotation axis ${\cal L}$ and rotation angles $+\pi/2$ and $-\pi/2$ respectively. Then the nullity of the entry (1,3) means that $Tv_3, Xv_3 \in \langle v_2,v_3\rangle$, which implies that either $v_3$ is in the direction of the axis, i.e. ${\cal L}=\langle v_3\rangle$, or it is orthogonal to the axis, in which case $\langle v_2,v_3\rangle ={\cal L}^\perp$ (otherwise it is impossible that $v_3$, $Tv_3$ and $Xv_3$ lie in the same plane). Now, suppose that $Y,Z$ are two other rotations with axis ${\cal L}^\prime$ and rotation angles $\pm \pi/2$. If ${\cal L}$ and  ${\cal L}^\prime$ are different and not perpendicular, then it is impossible that $Yv_3, Zv_3 \in \langle v_2,v_3\rangle$.
}
\end{example}

\begin{example}\label{CONTI} A continuous rational characteristic map.
{\rm
The pole $\pi_A= 1+\J$ is an eigenvalue of the matrix
$$A=\begin{bmatrix}
0 & -\J & \I \cr
-1+\J & \J & \K \cr
p & q & r\cr
\end{bmatrix},$$
with $p, q, r\in \HH$ arbitrary, $p,q\neq 0$. In fact,
$$\mu(\pi_A)= \left(q-(r-1-\J)\K\right)\left(-1+\J+\J(-1-\J)\right)=0.$$
}\end{example}

\begin{example}  Generic polynomial case.
{\rm Let
$$A=\begin{bmatrix}\K & 0 & 0 \cr
3\I-\J & -\I & \I \cr
1-2\K & \J & -\J\cr
\end{bmatrix}.$$
The characteristic map is $\mu(\lambda)=(-1-\K+\lambda\I)\lambda(\K-\lambda)$
hence $\sigma_l(A)=\{\K,0,-\I-\J\}$.
The differential of $\mu$ at each eigenvalue is
\begin{eqnarray*}
\mu_{*\K}(X) &=& (-1-\I+\K)X, \\
\mu_{*0}(X) &=& -(1+\K)X\K,\\
\mu_{*(-\I-\J)}(X) &=& X(1+2\I+\K).
\end{eqnarray*}
Then the matrix $A$ has  three different eigenvalues, all of them with maximal rank.
}\end{example}

\begin{example}
Two eigenvalues, one of null rank, the other one of maximal rank.
{\rm Let
$$A=\begin{bmatrix}-\I-\J & 0 & 0 \cr
\K & -\I & \I \cr
1-\I & \J & -\J \cr
\end{bmatrix}.$$
This time $\mu(\lambda)=(1+\K-\lambda\I)\lambda(\I+\J+\lambda)$ so $\sigma_l(A)=\{0,-\I-\J\}$.
We have
\begin{eqnarray*}
\mu_{*0}(X)&=&(1+\K)X(\I+\J), \\
\mu_{*(-\I-\J)}(X) &=& 0.
\end{eqnarray*}
}\end{example}

{\small

}
\end{document}